\newtheorem{theorem}{Theorem}[section]
\newtheorem{lemma}[theorem]{Lemma}
\newtheorem{proposition}[theorem]{Proposition}
\newtheorem{corollary}[theorem]{Corollary}
\theoremstyle{definition}
\newtheorem{definition}[theorem]{Definition}
\theoremstyle{remark}
\newcommand{\RR}{\mathbb R}
\newcommand{\abs}[1]{\left\vert#1\right\vert} 
\newcommand{\set}[2]{\left \{ \left. #1 \,\right| #2 \right\}  } 
\newcommand{\arrowmap}[5]{
    #1 \colon #2 &\to #3 \notag \\ 
    #4 &\mapsto #5
}
\newcommand{\norm}[1]{\left\lVert#1\right\rVert} 
\newcommand{\inprod}[2]{\left\langle #1, #2\right\rangle} 
\providecommand{\keywords}[1]
{
  \small	
  \textbf{Keywords:} #1
}
\title{A Riemannian covariance for manifold-valued data}
\author[1]{Meshal Abuqrais}
\author[2]{Davide Pigoli}
\affil[1,2]{King's College London}
\date{}
\begin{document}

\maketitle
\begin{abstract}
The extension of bivariate measures of dependence to non-Euclidean spaces is a challenging problem.
The non-linear nature of these spaces makes the generalisation of classical measures of linear dependence (such as the covariance) not trivial.
In this paper, we propose a novel approach to measure stochastic dependence between two random variables taking values in a Riemannian manifold, with the aim of both generalising the classical concepts of covariance and correlation and building a connection to Fréchet moments of random variables on manifolds.
We introduce generalised local measures of covariance and correlation and we show that the latter is a natural extension of Pearson correlation.
We then propose suitable estimators for these quantities and we prove strong consistency results.
Finally, we demonstrate their effectiveness through simulated examples and a real-world application.
\end{abstract}

\keywords{Geometric statistics, Object data analysis, Non-Euclidean data, Stochastic dependence, Fréchet moments, Vectorcardiogram data.}

\section{Introduction}

The statistical analysis of data belonging to non-Euclidean spaces has attracted significant attention in recent years (see, e.g., \cite{marron2021object, patrangenaru2016nonparametric}). The practical importance of statistical analysis for non-Euclidean data stems from the need to handle complex and diverse data structures such as shape data in medical imaging (\cite{bharath2018radiologic}), network data in linguistics (\cite{severn2022manifold}), and probability density functions in environmental sciences (\cite{menafoglio2021object}), to name just a few. 
Since classical statistical techniques were developed for Euclidean (flat) spaces, they are often unsuitable when the space of interest exhibits a more complex geometry, such as nonzero curvature.

Historically, one can argue that the development of statistical methodology for non-Euclidean data started with Fr\'{e}chet, with his work on mean points in general metric spaces (\cite{frechet1948elements}). 
Subsequently, several branches of statistics addressed non-Euclidean data, including directional statistics (\cite{mardia2009directional}), statistical shape analysis (\cite{dryden2016statistical}), and compositional data (\cite{aitchison1982statistical}). 
More recently, the common framework of object data analysis (\cite{marron2014overview}) has been developed to analyse these kinds of data, and an extensive toolbox is now available, in particular for the case of data taking values in manifolds or metric spaces (\cite{patrangenaru2016nonparametric}). 
Statistical methods for manifold-valued data have been used, for example, in medical statistics (see, e.g., \cite{pennec-sommer-fletcher-2019riemannian-medical}) and medical imaging (e.g., diffusion tensor data \cite{fletcher2007riemannian}).
However, the question of how to model and measure dependence between manifold-valued random variable is still an open research question. This is crucial for example to extend existing methodology to non independent sample, e.g. in the case of time series of manifold-valued data.  

Manifolds, in general, are not vector spaces, which means that traditional notions of dependence, such as covariance and linear correlation (e.g., Pearson correlation), do not directly extend to Riemannian manifolds. 
This limitation has motivated efforts to generalise these concepts beyond Euclidean settings. 
Some of these efforts have focused on extending to non-linear spaces measures that were originally developed to capture non-linear dependence in Euclidean spaces. 
For example, \cite{distance-covariance-in-metric-spaces-Lyons} successfully extended the foundational work of \cite{szekely2007measuring} on distance covariance and distance correlation. 
Specifically, \cite{distance-covariance-in-metric-spaces-Lyons} showed that if the random variables take values in a metric space of strong negative type, then the distance correlation satisfies the zero-correlation-independence criterion (i.e., the correlation between random variables is zero if and only if they are independent). 
Moreover, it was shown in the same paper that the assumption that the metric space is of strong negative type is not only sufficient but also necessary for the zero-correlation-independence equivalence to hold for distance correlation. 
Unfortunately, many metric spaces encountered in practice are not of negative type. For example, \cite{hjorth2002hyperbolic} demonstrated that any compact Riemannian manifold that is not simply connected cannot be of negative type as a metric space. 
This implies that the torus $\mathbb{T}^2$ or SO(3) are not of negative type. 
Moreover, the distance correlation measures the strength of the dependence but it doesn't distinguish between linear and non-linear dependence and it doesn't give any indication of the direction of this dependence, being a positive measure. 
Similar drawbacks afflict for example the Ball covariance proposed by \cite{pan2020ball}.
Similar ideas have been previously applied in specific settings. Recently, \cite{shao2022intrinsic} introduced a measure of covariance for Riemannian functional data by mapping random variables to the tangent spaces at their respective Fréchet means and defining the covariance as the covariance operator for the resulting random tangent vectors. Although this approach resembles our proposed measure, it does not account for the intermediate geometry between the means.
Other measures of dependence have been proposed for specific manifolds such as the torus and sphere in the context of spherical regression; see \cite{zhan2019circular} and \cite{downs2003spherical}. 
These measures rely heavily on the underlying geometry of the space and therefore cannot be easily extended to other Riemannian manifolds. 

The aim of this work is to introduce a novel measure of dependence, the Riemannian covariance, which offers a clear geometric interpretation and applies to a broad class of manifolds. The Riemannian covariance can be seen as both a generalisation of the classical covariance in Euclidean spaces and a measure compatible with the concept of Fréchet moments (\cite{patrangenaru2016nonparametric}), the commonly used framework for describing the moments of manifold-valued random variables. 
While the Riemannian covariance (and the corresponding Riemannian correlation) can be defined under more general conditions, this work focuses on the case where the underlying space is a compact and connected Riemannian manifold. 
Additionally, under certain assumptions on the distribution support, we prove the strong consistency of estimators for both the Riemannian covariance and Riemannian correlation. 
To test the robustness of our generalised covariance and correlation measures, we will compare them with existing measures of dependence based on distance covariance (\cite{distance-covariance-in-metric-spaces-Lyons}) by simulations on $\mathbb{S}^2$ and SO(3).

The paper is organised as follows. 
Section 2 provides the necessary background on differential geometry and probability theory on Riemannian manifolds. 
In this section, we also describe the Fréchet function, a fundamental concept upon which our work is built. 
In Section 3, we introduce the proposed Riemannian covariance and Riemannian correlation, highlighting some of their essential properties. Additionally, we provide natural sample estimators for these measures. 
Section 4 presents the main results of this work and explores their implications. 
Specifically, we derive a strong consistency theorem for the sample generalised covariance/correlation on Riemannian manifolds. 
Section 5 includes several simulation studies designed to assess the finite sample properties of our proposed estimators in two examples of Riemannian manifolds. 
Furthermore, we apply these measures to real-world datasets, specifically vectorcardiogram data, showcasing their applicability and effectiveness in this context. Finally, Section 6 presents conclusions and some future research directions.

\section{An overview of differential geometry and probability theory on manifolds}
In this section, we provide some essential background of differential geometry and probability theory on manifolds.
Our discussion on the geometric aspects is primarily based on \cite{Rie-Lee} and \cite{tu2017differential}. 
For probability theory on manifolds and metric spaces, we refer the readers to \cite{pennec2006intrinsic} and \cite{patrangenaru2016nonparametric}.
\subsection{Riemannian manifolds}
An $n$-dimensional Riemannian manifold is a pair $(M, \inprod{\cdot}{\cdot})$ consisting of a smooth $n$-dimensional manifold $M$ together with a smooth symmetric covariant 2-tensor field $\inprod{\cdot}{\cdot}$ that is positive definite at each point of $M$. This tensor field is called the Riemannian metric of $M$. 
We shall denote the Riemannian metric at a point $p \in M$ by $\inprod{\cdot}{\cdot}_p$. 
For each $p\in M$, the Riemannian metric gives an inner product on the tangent space $T_pM$, given by the map $(v, w) \mapsto \inprod{v}{w}_p$ for all $v, w \in T_pM$.
The norm induced by the inner product is denoted by $\norm{\cdot}_p$ or $\norm{\cdot}$ if the point $p$ is understood.
From now on, we assume $M$ is an $n$-dimensional compact and connected Riemannian manifold.

Let $C^\infty(M)$ denote the space of all smooth real-valued functions on the manifold $M$. The vector space $\mathfrak{X}(M)$ consists of all smooth vector fields on $M$, with operations defined pointwise.

A connection on $M$ is a map
\begin{equation*}
\nabla:\mathfrak{X}(M)\times \mathfrak{X}(M) \longrightarrow \mathfrak{X}(M),
\end{equation*}
written as $\nabla_X Y$ instead of $\nabla(X,Y)$, that is $C^\infty(M)$-linear in $X$ and satisfies the Leibniz product rule 
\begin{equation*}
\nabla_X(fY) = (Xf)Y + f\nabla_X Y,
\end{equation*}
for all $f \in C^\infty(M)$ and $X, Y \in \mathfrak{X}(M)$.

The connection is called symmetric if 
\begin{equation*}
\left[X,Y\right] = \nabla_{X}Y - \nabla_Y X,
\end{equation*}
for all $X, Y \in \mathfrak{X}(M)$. 
Additionally, $\nabla$ is said to be compatible with the metric of $M$ if, for all $X, Y, Z \in \mathfrak{X}(M)$,
\begin{equation*}
Z\inprod{X}{Y} = \inprod{\nabla_Z X}{Y} + \inprod{X}{\nabla_Z Y}.
\end{equation*}

One of the fundamental results in Riemannian geometry is that for a Riemannian manifold $(M,\inprod{\cdot}{\cdot})$, there exists a unique symmetric connection on $M$ that is compatible with the metric $\inprod{\cdot}{\cdot}$.
This connection is called the Levi-Civita connection on $M$.
In this article, we assume that all Riemannian manifolds are equipped with their Levi-Civita connection.
For any smooth curve $\gamma\colon I\to M$, let $\mathfrak{X}(\gamma)$ be the vector space of all smooth vector fields along $\gamma$.
Then the connection $\nabla$ induces a unique operator $D_t\colon \mathfrak{X}(\gamma)\to\mathfrak{X}(\gamma)$ called the covariant derivative along $\gamma$. $D_{t}$ is $\mathbb{R}$-linear, satisfies the product rule, and for any smooth extension $\widetilde{V}$ of $V\in\mathfrak{X}(\gamma)$,
\begin{equation*}
D_{t}V(t) = \nabla_{\gamma'(t)}\widetilde{V}.
\end{equation*}
A curve $\gamma$ in $M$ is called a geodesic if
\begin{equation*}
D_{t}\gamma' = 0.
\end{equation*}
In the standard Euclidean space $\mathbb{R}^n$ the covariant differentiation of vector field along a curve corresponds to the acceleration vector field of curve.
So geodesics in $\mathbb{R}^n$ are precisely straight lines.
In this sense, geodesics are generalisation of straight lines in Euclidean spaces.
The tangent bundle of $M$ is defined as
\begin{equation*}
TM=\set{(p,v)}{p\in M, v\in T_{p}M}.
\end{equation*}
A fundamental result in the theory of smooth manifolds states that for any $(p, v) \in TM$, there exists a unique maximal geodesic $\gamma_v(t; p)$ defined on an interval containing $0$, such that $\gamma_v(0; p) = p$ and $\gamma'_v(0; p) = v$. If the point $p$ is understood from the context, we often write $\gamma_v(t)$ instead of $\gamma_v(t; p)$.

Since we are assuming $M$ is compact, the Hopf-Rinow theorem implies that for all $p \in M$ and $v \in T_pM$, the geodesic $\gamma_v(t; p)$ is defined on the entire real line $\mathbb{R}$.
The exponential map is then defined as
\begin{align*}
\arrowmap{\exp}{TM}{M}{(p,v)}{\exp_p(v) = \gamma_v(1; p)}.
\end{align*}
The definition of the exponential map implies that for all $v \in T_pM$, the map $t \mapsto \exp_p(tv)$ is a geodesic curve parameterised by $t \in \mathbb{R}$.

The exponential map is smooth from $TM$ to $M$.
Therefore, by the inverse function theorem, for each $p \in M$, there exists a neighborhood $W$ of $0 \in T_pM$ such that $\exp_{p}\colon W \to \exp_{p}(W)$ is a diffeomorphism.
In particular, there exists $\varepsilon > 0$ such that $\exp_{p}\colon B_{\varepsilon}(0) \to \exp_p(B_{\varepsilon}(0))$ is a diffeomorphism.
Therefore, the exponential map is a local diffeomorphism at $p$, meaning it can serve as a coordinate map once $T_pM$ is identified with $\mathbb{R}^n$.
The inverse map of $\exp_p(\cdot)$ is called the logarithm map at $p$, denoted by $\log_p(\cdot)$. 
Under this identification, we can perform concrete computations with the logarithm map as a chart from a subset of $M$ to $\mathbb{R}^n$.
If $\exp_p\colon W \to M$ is a diffeomorphism, its image is called the normal neighborhood of $p$. When $W = B_0(\varepsilon)$ is an open ball centred at $0$ with radius $\varepsilon > 0$, the set $\exp_p(W)$ is called a geodesic ball centred at $p$, which we denote by $\mathcal{B}_p(\varepsilon)$.
The largest $r > 0$ such that $\exp_p(B_r(0))$ is a geodesic ball is called the injectivity radius at $p$, written as $\operatorname{inj}(p)$.
The injectivity radius of the entire manifold $M$ is defined as $\operatorname{inj}(M) = \inf\limits_{p \in M} \operatorname{inj}(p)$.

We now consider $M$ as a metric space. 
Let $\mathcal{L}_{p}^{q}$ be the set of all piecewise smooth curves $\gamma$ on $[0,1]$ such that $\gamma(0) = p$ and $\gamma(1) = q$.
The length of $\gamma \in \mathcal{L}_{p}^{q}$ is defined as
\begin{equation*}
L(\gamma) = \int_{0}^1 \|\gamma'(t)\|_{\gamma(t)} \, dt.
\end{equation*}
For all $p, q \in M$, the distance between $p$ and $q$ is defined as
\begin{equation*}
d(p, q) = \inf_{\gamma \in \mathcal{L}_{p}^q} \{L(\gamma)\}.
\end{equation*}
When dealing with $M$ as a metric space, we are assuming the distance function above.
A curve $\gamma$ in $\mathcal{L}_{p}^q$ is said to be length-minimising or minimising if $L(\gamma) = d(p, q)$.
Given normal coordinates on a geodesic ball \(\mathcal{B}_{p}\), then
for all \(q\in\mathcal{B}_{p}\),
\begin{equation*}
d(p,q) = \norm{\log_{p}q},
\end{equation*}
where $\norm{\cdot}$ here is the Euclidean norm in the tangent space $T_pM$, which is identified with $\mathbb{R}^n$ in these coordinates.
Because of this property, $\log_{p}(\cdot)$, or $\exp_{p}(\cdot)$, is called a local radial isometry at $p$.
A geodesic ball $\mathcal{B}_p$ is called a geodesically convex ball if for all $x, y \in \mathcal{B}_p$, there exists a unique length-minimising geodesic connecting $x$ and $y$. 
Geodesically convex balls are denoted by $\mathcal{CB}$.
The largest $r > 0$ such that $\exp_p(B_0(r))$ is a convex ball is called the convexity radius of $M$ at $p$.
The convexity radius of $M$ is defined as $\operatorname{conv}(M) = \inf\limits_{p \in M} \operatorname{conv}(p)$. 
A basic theorem in Riemannian geometry states that for a compact manifold $M$, $0 < \operatorname{conv}(M) \leq \frac{1}{2} \operatorname{inj}(M)$, see \cite{berger2003panoramic}.

Next, we set up the integration framework on Riemannian manifolds.
Let $(U, x)$ be a smooth chart on $M$.
Define $G$ as the matrix with components given by $G_{ij} = \left\langle \frac{\partial}{\partial x^i}, \frac{\partial}{\partial x^j} \right\rangle$, where $i, j = 1, \ldots, n$.
The Riemannian volume form on $M$ is defined as
\begin{equation*}
dV = \sqrt{\det(G)} \, dx^1 \cdots dx^n.
\end{equation*}
The volume form $dV$ induces a measure on the Borel $\sigma$-algebra of $M$. 
This volume form is also referred to in the literature as the "Riemannian measure" or "volume measure." 
Therefore, it is possible to define integrals of real-valued measurable functions on $M$.
However, our focus will be on the integration of measurable functions with compact support.

Given a measurable function $f: M \to \mathbb{R}$ with compact support contained within the chart $U$, the integral of $f$ with respect to the volume form $dV$ is defined by
\begin{equation*}
\int_{U} f \, dV = \int_{x(U)} \left(f \circ x^{-1}\right) \sqrt{\det(G \circ x^{-1})} \, dx^1 \cdots dx^n.
\end{equation*}

More generally, let $f: M \to \mathbb{R}$ be a measurable function with compact support.
Suppose $\{(U_\alpha, x_\alpha)\}_{\alpha \in \mathcal{A}}$, where $\mathcal{A}$ is an indexing set, is a cover of $M$, and let $\{\rho_\alpha\}_{\alpha \in \mathcal{A}}$ be a smooth partition of unity subordinate to this cover. 
Then the integral of $f$ with respect to $dV$ is defined as
\begin{equation*}
\int_{M} f \, dV = \sum_{\alpha \in \mathcal{A}} \int_{U_\alpha} \rho_\alpha f \, dV.
\end{equation*}
It can be shown that this definition is independent of the choice of charts, and, thus, the integral is well-defined.
The volume of a Borel subset $U \subset M$ is defined by
\begin{equation*}
\text{Vol}(U) = \int_{U} dV.
\end{equation*}
Because we are interested in compact manifolds, the volume is always finite.

\subsection{Probability theory on Riemannian manifolds}
Let $(\Omega, \Sigma, P)$ be a probability space, and let $M$ be a compact and connected $n$-dimensional Riemannian manifold. 
The random variables of interest are measurable functions from $\Omega$ to $M$. 
We refer to such functions as $M$-valued random variables or random objects. 
Similar to density functions in Euclidean spaces that are absolutely continuous with respect to the Lebesgue measure, we consider densities that are absolutely continuous with respect to the volume measure on Riemannian manifolds. 
If $X$ is an $M$-valued random variable, its probability distribution is the probability measure on $\mathcal{B}(M)$ given by $P_X(B) = P \circ X^{-1}(B)$ for all $B \in \mathcal{B}(M)$. 
All $M$-valued random variables are assumed to have probability distributions that are absolutely continuous with respect to the volume measure on $M$.

One of the major obstacles in performing data analysis for manifold-valued random variables is the nonlinear nature of the space in which the data lie. 
Furthermore, there are no natural coordinates or points of reference like point of origin, unlike linear spaces such as $\mathbb{R}^n$.
Therefore, careful thought has to be given when defining basic quantities such as mean or variance.
The problem of handling random variables that assume values in a metric space was first considered by \cite{frechet1948elements}.
He observed that the mean $\mu$ of a random vector $X: \Omega \to \mathbb{R}^n$ could be characterised as the unique minimizer of the functional
\begin{equation*}
\mu = \text{arg}\min\limits_{p \in \mathbb{R}^n} E\left[\|X - p\|^2\right] = \text{arg}\min\limits_{p \in \mathbb{R}^n}  E\left[d(X, p)^2\right],
\end{equation*}
where $d$ is the Euclidean distance function on $\mathbb{R}^n$.
This serves as the starting point for defining moments for random variables taking values in a manifold or metric space.
While one could define the Fréchet function for random objects in metric spaces, we restrict ourselves here to compact and connected Riemannian manifolds.

Formally, if $X: \Omega \to M$ is a random variable whose probability measure is $Q_X$, the $r$-th Fréchet function of $X$ is defined by
\begin{equation*}
\mathcal{F}_{r,X}(p) = E\left[d^{r}(X, p)\right] = \int_{M} d^{r}(p, x) \, dQ_X(x). 
\end{equation*}
For $\mathcal{F}_{2,X}(p)$, we define $\sigma_X^2 = \inf_{p \in M} \mathcal{F}_{2,X}(p)$ to be the Fréchet total variance of $X$.
The set of minimizers of $\mathcal{F}_{2,X}(p)$ is called the Fréchet mean set. 
Because we are interested in the case when $r = 2$, we write $\mathcal{F}_X(p)$ instead of $\mathcal{F}_{2,X}(p)$ or $\mathcal{F}_X$ when there is no confusion. The existence of minimizers is not guaranteed in the general case when the random object takes values in a metric space. 
However, in our setting, the compactness of the manifold implies the existence of extreme values for the $r$-th Fréchet function of $X$, see Lemma \ref{lemma: boundedness of Frechet function}.

An immediate observation is that the Fréchet mean may not be a singleton. In other words, the random object $X$ may have more than one mean. Clearly, this phenomenon does not occur in Euclidean space $\mathbb{R}^n$. For example,  an $\mathbb{S}^2$-valued random variable with uniform distribution has the entire space $\mathbb{S}^2$ as its Fréchet mean set. One existence and uniqueness result for the minimizers of the Fréchet second moment function, $r = 2$, in the context of Riemannian manifolds can be found in Chapter 8 of \cite{buser1981karcher-gromov}. In particular, it states that if the $M$-valued random variable $X$ is supported in a geodesically convex ball $\mathcal{B}$, then $X$ has a unique Fréchet mean $\mu \in \mathcal{B}$. Furthermore, the Fréchet mean $\mu$ is characterised as the unique $\mu$ for which
\begin{equation*}
E[\log_{\mu}X] = 0.
\end{equation*}
Moreover, global uniqueness holds for special manifolds such as Hadamard manifolds; see \cite{pennec2006intrinsic}.
Sufficient conditions for the existence and uniqueness of minimizers for the $r$-th Fréchet function were established in \cite{afsari2011riemannian} based on the injectivity radius and sectional curvature of the underlying manifold.

Next, we introduce the empirical version of the Fréchet mean set.
Let $X_1, \ldots, X_n$ be independent $M$-valued random variables with a common probability measure $Q$. 
Let
\begin{equation*}
\widehat{Q}_n = \frac{1}{n} \sum_{k=1}^n \delta_{X_k},
\end{equation*}
be their empirical distribution. The Fréchet sample mean set is the set of minimizers $\widehat{\mu}_n$ of the Fréchet function associated with $\widehat{Q}_n$. Specifically, the Fréchet sample mean $\widehat{\mu}_n$ is given by
\begin{equation}\label{Frechet sample mean}
\widehat{\mu}_n=\text{arg}\min\limits_{p \in {M}}\left(\frac{1}{n} \sum_{k=1}^n d^2(X_k, p)\right),
\end{equation}
A general estimation result for the Fréchet mean set is the strong consistency, or almost sure convergence, of the Fréchet sample mean set $\widehat{\mu}_n$, as given in (\ref{Frechet sample mean}), to the Fréchet mean set of $X$. See \cite{patrangenaru2016nonparametric}, Theorem 4.2.4.
As with the Fréchet mean set, the sample mean set may not be a singleton.
However, the existence and uniqueness results discussed above for the Fréchet mean also apply to the Fréchet sample mean.

\section{Riemannian covariance and Riemannian correlation}

Having established the geometric and probabilistic frameworks for analysing manifold-valued data, we are now in a position to introduce the central concepts of this paper: Riemannian covariance and Riemannian correlation. These measures, formulated as functions, extend the familiar notions of covariance and correlation from classical Euclidean spaces to Riemannian manifolds.
We will begin by formally defining these dependence measures and exploring their fundamental properties and statistical interpretations. 
Then, we will introduce the corresponding empirical estimators for each measure.

Throughout this section, we assume that $M$ is a compact and connected Riemannian $n$-manifold. We denote the geodesic ball centred at $p$ by $\mathcal{B}_{p}$. For any $M$-valued random variable, we assume that its density function is absolutely continuous with respect to the Riemannian volume measure of $M$.
 
\subsection{Definition of Riemannian covariance}\label{Covariance in convex geodesic balls section}
Our starting point is a note made in \cite{pennec2006intrinsic} which gave an analogue for cross-covariance of random variables in Riemannian manifolds.
In particular, let $X\colon\Omega\to M$ be a random object with unique Fréchet mean $\mu$. 
If $X$ is supported in normal neighborhood of $\mu$, then the cross-covariance matrix of $X$ is defined as
\begin{equation}\label{pennec-covariance}
\Sigma_{\mu}(X,X)=E\left[(\log_{\mu}X)(\log_{\mu}X)^T\right],
\end{equation}
This definition is naturally dependent on the point $\mu$. While it serves as an appropriate measure for a single random object or for two random objects sharing the same Fréchet mean, it becomes less clear how to extend this measure to cases where the random objects have different Fréchet means.

To address this, we extend the dependence on $\mu$ by considering it as a function on $M$. However, since the domain of the logarithm map (the inverse of the exponential map) is not generally the entire manifold, it is necessary to first specify the correct domain for such measure.

\begin{definition}
Let $X$ and $Y$ be both $M$-valued random variables with densities $f_X$ and $f_Y$, respectively. We define the domain of comparison between $X$ and $Y$ as the set
\begin{equation*}
\mathcal{D}(X,Y)=\set{p\in M}{\text{supp$f_X$},\text{supp$f_Y$}\subset\mathcal{B}_{p}\text{ a.s.}}.
\end{equation*}

If $X=Y$ almost surely, then we write $D(X)$ for $\mathcal{D}(X,X)$.
\end{definition}

Motivated by (\ref{pennec-covariance}), we take one step further by introducing the following concept of covariance between two random objects.
\begin{definition} [Riemannian covariance]
Suppose that $X$ and $Y$ are $M$-valued and assume $p\in\mathcal{D}(X,Y)$.
Let the Riemannian cross-covariance matrix of $X$ and $Y$ at $p$ be
\begin{equation*}
\Sigma_{p}(X,Y)=E\left[\log_{p}X\log_{p}Y^T\right]-E\left[\log_{p}X\right]E\left[\log_{p}Y\right]^T.
\end{equation*}
The Riemannian covariance between $X$ and $Y$ at $p$ is then defined as
\begin{equation*}
\text{Rcov}_{p}(X,Y)=tr(\Sigma_{p}(X,Y)).
\end{equation*}
\end{definition}
In other words, we have $\Sigma_p(X,Y)$ and Rcov$_p(X,Y)$
\begin{align*}
\arrowmap{\Sigma(X,Y)}{\mathcal{D}(X,Y)}{\RR^{n\times n}}{p}{\Sigma_{p}(X,Y)},
\end{align*}
and
\begin{align*}
\arrowmap{\text{Rcov}(X,Y)}{\mathcal{D}(X,Y)}{\RR}{p}{\text{Rcov}_{p}(X,Y)},
\end{align*}
as locally defined Euclidean-valued functions on $\mathcal{D}(X,Y)\subset M$.
Note that the cross-covariance matrix is not basis independent.
However, the Riemannian covariance at $p$ is in fact independent of the basis choice since it involves the trace.
Therefore, it is more convenient to deal with Rcov instead of $\Sigma$.
An immediate result is the following.
\begin{proposition}\label{proposition Rcov(X,X) and Frechet function simple relation}
Let $M$ be a compact and connected Riemannian manifold. Assume that $X$ is an $M$-valued random variable and $p\in\mathcal{D}(X)$. Then
\begin{equation*}
\text{Rcov}_{p}(X,X)=tr(\Sigma_{p}(X,X))=\mathcal{F}_{2}(p)-\norm{E\left[\log_{p}X\right]}^2.
\end{equation*}
In particular, if $X$ has unique Fréchet mean $\mu$ and $X$ is supported in the geodesic ball $(\mathcal{B}_\mu,\log_{\mu})$, then 
\begin{equation*}
\text{Rcov}_{\mu}(X,X)=\mathcal{F}_2(\mu).
\end{equation*}
\end{proposition}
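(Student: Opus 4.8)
The plan is to reduce the statement to the elementary matrix identity $tr(vv^T) = \norm{v}^2$ combined with the local radial isometry $d(p,x) = \norm{\log_p x}$, which is available precisely because $p \in \mathcal{D}(X)$. All computations take place in normal coordinates at $p$, under which $T_pM$ is identified with $\RR^n$, so that the outer products and the norm appearing below are the standard Euclidean ones, and the radial isometry holds for this same norm.

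First I would set $Y = X$ in the definition of the cross-covariance matrix, giving
\begin{equation*}
\Sigma_p(X,X) = E\left[\log_p X \log_p X^T\right] - E\left[\log_p X\right]E\left[\log_p X\right]^T,
\end{equation*}
and then take the trace. Using linearity of the trace and the fact that it commutes with the expectation, together with $tr(vv^T) = \norm{v}^2$ applied to each term, I obtain
\begin{equation*}
tr(\Sigma_p(X,X)) = E\left[\norm{\log_p X}^2\right] - \norm{E\left[\log_p X\right]}^2.
\end{equation*}

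The one genuinely geometric step is to recognise the first expectation as the Fréchet function. Since $p \in \mathcal{D}(X)$, the support of $f_X$ lies in $\mathcal{B}_p$ almost surely, so $\log_p X$ is defined a.s.\ and the radial isometry gives $\norm{\log_p X} = d(p,X)$ pointwise; hence $E[\norm{\log_p X}^2] = E[d^2(p,X)] = \mathcal{F}_2(p)$, which is the first claimed identity. For the second assertion I would substitute $p = \mu$ and invoke the Fréchet-mean characterisation recalled above: when $X$ is supported in the geodesically convex ball $\mathcal{B}_\mu$ with unique mean $\mu$, one has $E[\log_\mu X] = 0$, so the subtracted term vanishes and $\text{Rcov}_\mu(X,X) = \mathcal{F}_2(\mu)$.

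I do not expect a substantive obstacle: the argument is essentially bookkeeping. The only points needing care are the interchange of trace and expectation and the finiteness of the second moments, both of which follow from the compactness of $M$ (distances, and hence $\norm{\log_p X}$ on the support, are uniformly bounded), and the careful use of $p \in \mathcal{D}(X)$ to ensure that the logarithm is defined over the entire support and realises the Riemannian distance there.
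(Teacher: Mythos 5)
Your proposal is correct and follows essentially the same route as the paper's proof: the trace identity $tr(vv^T)=\norm{v}^2$, the radial isometry $d(p,X)=\norm{\log_p X}$ guaranteed by $p\in\mathcal{D}(X)$ to identify $E\left[\norm{\log_p X}^2\right]$ with $\mathcal{F}_2(p)$, and the characterisation $E[\log_\mu X]=0$ of the Fr\'echet mean for the second claim. Your version is slightly more explicit about the trace--expectation interchange and moment finiteness, but the argument is the same.
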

\begin{proof}
See appendix \ref{sec:appendix}.
\end{proof}
Observe that this formula is analogous to the one for the variance in Euclidean space,
\begin{equation*}
Var(X)=E[X^2]-E^2[X].
\end{equation*}
If $X$ and $Y$ are both $M$-valued random variables with the same Fréchet mean $\mu$, then one can easily see by following the same argument in the proof of Proposition \ref{proposition Rcov(X,X) and Frechet function simple relation} that
\begin{equation*}
\text{Rcov}_\mu(X,Y)=E[\log_{\mu}X^T\log_{\mu}Y].
\end{equation*}
Because of this observation, when $X$ and $Y$ share the same Fréchet mean $\mu$, a natural choice is to evaluate $\text{Rcov}_{p}(X, Y)$ at $p = \mu$.
On the other hand, if $X$ and $Y$ have different Fréchet means, the choice of $p$ becomes arbitrary.
However, we can obtain an interpretable value for $\text{Rcov}_{p}(X, Y)$ by selecting $p$ as the midpoint between the two Fréchet means, provided there is a unique length-minimising geodesic between them, as we will discuss in subsection \ref{sec:estimators}.

\subsection{Riemannian correlation}\label{Riemannian correlation section}
Based on the covariance we have defined, the next step is to derive a correlation measure from it. 
This Riemannian correlation provides an analogue of the classical Pearson correlation for Riemannian-valued random variables.
\begin{definition}[Riemannian correlation]
Suppose that $X$ and $Y$ are $M$-valued and assume $p\in\mathcal{D}(X,Y)$.
We define their Riemannian cross-correlation matrix at $p$ to be
\begin{equation*}
\mathcal{R}_p(X,Y)=\frac{\Sigma_{p}(X,Y)}{\sqrt{tr(\Sigma_{p}(X,X))} \sqrt{tr(\Sigma_{p}(Y,Y))}}.
\end{equation*}
The Riemannian Pearson-correlation, or simply the Riemannian correlation, is defined as
\begin{equation*}
\text{Rcorr}_p(X,Y)=tr(R_p(X,Y)).
\end{equation*}
\end{definition}
The immediate natural question is whether Rcorr$_p(X,Y)$ is bounded between $-1$ and $1$ as in the case of classical Pearson-correlation. This is indeed the case, as shown in the following proposition.

\begin{proposition}\label{proposition: Rcorr is between -1 and 1}
Let $M$ be a compact and connected Riemannian manifold. Let $X$ and $Y$ be two $M$-valued random variables and $p\in\mathcal{D}(X,Y)$.
Then $\text{Rcorr}_{p}(X,Y)\in[-1,1]$.
\end{proposition}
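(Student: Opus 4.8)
The plan is to reduce the statement to the ordinary Cauchy--Schwarz inequality in the Hilbert space of square-integrable $\RR^n$-valued random variables. Fix $p \in \mathcal{D}(X,Y)$ and set $U = \log_p X$ and $V = \log_p Y$, regarded as $\RR^n$-valued random vectors under the identification $T_pM \cong \RR^n$. Because $p \in \mathcal{D}(X,Y)$, the supports of $X$ and $Y$ lie in the geodesic ball $\mathcal{B}_p$ almost surely, so $U$ and $V$ are well defined a.s.; moreover, since $\log_p$ is a radial isometry we have $\norm{U} = d(p,X)$ and $\norm{V} = d(p,Y)$, both bounded by $\operatorname{diam}(M) < \infty$ by compactness of $M$. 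Hence $U, V \in L^2(\Omega;\RR^n)$ and all the expectations below are finite.

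Next I would express $\text{Rcorr}_p(X,Y)$ in these terms. Writing $\tilde U = U - E[U]$ and $\tilde V = V - E[V]$ for the centred vectors, the $(i,i)$ diagonal entry of $\Sigma_p(X,Y)$ is $\operatorname{Cov}(U_i,V_i) = E[\tilde U_i \tilde V_i]$, so summing over $i$ gives
\begin{equation*}
\operatorname{tr}(\Sigma_p(X,Y)) = \sum_{i=1}^n E[\tilde U_i \tilde V_i] = E\!\left[\inprod{\tilde U}{\tilde V}\right],
\end{equation*}
and in the same way $\operatorname{tr}(\Sigma_p(X,X)) = E[\norm{\tilde U}^2]$ and $\operatorname{tr}(\Sigma_p(Y,Y)) = E[\norm{\tilde V}^2]$. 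Since $\text{Rcorr}_p(X,Y)$ is the trace of $\Sigma_p(X,Y)$ divided by $\sqrt{\operatorname{tr}(\Sigma_p(X,X))}\sqrt{\operatorname{tr}(\Sigma_p(Y,Y))}$, this yields
\begin{equation*}
\text{Rcorr}_p(X,Y) = \frac{E\!\left[\inprod{\tilde U}{\tilde V}\right]}{\sqrt{E[\norm{\tilde U}^2]}\,\sqrt{E[\norm{\tilde V}^2]}}.
\end{equation*}

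The key observation is now that $\inprod{\tilde U}{\tilde V}_{L^2} := E[\inprod{\tilde U}{\tilde V}]$ is precisely the inner product of $\tilde U$ and $\tilde V$ in $L^2(\Omega;\RR^n)$, with the two factors in the denominator equal to the associated $L^2$-norms $\norm{\tilde U}_{L^2}$ and $\norm{\tilde V}_{L^2}$. The bound $\abs{\text{Rcorr}_p(X,Y)} \le 1$ then follows at once from the Cauchy--Schwarz inequality $\abs{\inprod{\tilde U}{\tilde V}_{L^2}} \le \norm{\tilde U}_{L^2}\,\norm{\tilde V}_{L^2}$ in this Hilbert space.

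Finally I would address the degenerate cases, which are the only delicate point. If $E[\norm{\tilde U}^2] = 0$ then $\tilde U = 0$ almost surely, so $U$ --- and hence $X$ --- is almost surely constant, exactly the situation in which Pearson correlation is undefined in $\RR^n$; under the standing assumption that $X$ and $Y$ are non-degenerate, both denominators are strictly positive and the expression is well defined. I do not anticipate a substantive obstacle: the entire content is the recognition that fixing $p$ and applying $\log_p$ linearises the problem, so that a manifold-level statement collapses to Cauchy--Schwarz, with the only care needed being the $L^2$-integrability (supplied by compactness) and the identification of $\operatorname{tr}(\Sigma_p(X,Y))$ with the $L^2$ inner product of the centred log-vectors.
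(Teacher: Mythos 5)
Your proposal is correct and follows essentially the same route as the paper's proof: both rewrite $\mathrm{Rcov}_p(X,Y)$ as the expectation of the Euclidean inner product of the centred log-vectors $\log_p X - E[\log_p X]$ and $\log_p Y - E[\log_p Y]$, and then bound it by Cauchy--Schwarz against $\operatorname{tr}(\Sigma_p(X,X))^{1/2}\operatorname{tr}(\Sigma_p(Y,Y))^{1/2}$. Your packaging of the inequality as Cauchy--Schwarz in the Hilbert space $L^2(\Omega;\RR^n)$, and your explicit treatment of the degenerate case where a denominator vanishes (which the paper leaves implicit), are minor refinements rather than a different argument.
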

\begin{proof}
See appendix \ref{sec:appendix}.
\end{proof}
As it was in the case of the Riemannian covariance, it is more convenient to deal with Rcorr than $\mathcal{R}$, due to its coordinate-independence. Like the Pearson correlation in statistics, $\text{Rcorr}_p(X,Y)$ measures the collinearity of the projection of $X$ and $Y$ in the tangent space at $P$.

The statistical interpretation of Rcorr is similar to that in Euclidean spaces, with lines being replaced by geodesics. If $\text{Rcorr}_p(X,Y)$ is positive, it means that $X$ and $Y$ tend to move together in the same direction along some radial geodesic at $p$. On the other hand, if $\text{Rcorr}_p(X,Y)$ is negative, this indicates that $X$ and $Y$ tend to move in opposite directions along some radial geodesic at $p$.
The absolute value of Rcorr captures the strength of the alignment along geodesics.

\subsection{Estimating Riemannian covariance and Riemannian correlation from a paired sample}
\label{sec:estimators}
Now that we have defined the Riemannian covariance and correlation, we turn to their estimators.
Let us assume that $\set{(X_k,Y_k)}{k=1,\ldots,N}$ is a random sample from a pair of $M$-valued random variables $(X,Y)$.
A natural estimator for $\Sigma_{p}(X,Y)$ is
\begin{equation*}
\widehat{\Sigma}_{p}(X,Y) = \frac{1}{N}\sum_{k=1}^N \left(\log_{p}X_k\right)\left(\log_{p}Y_k\right)^T - \left(\frac{1}{N}\sum_{k=1}^N\log_{p}X_k\right)\left(\frac{1}{N}\sum_{k=1}^N\log_{p}Y_k\right)^T.
\end{equation*}
So an estimator for $\text{Rcov}_p(X,Y)$ is the sample Riemannian covariance
\begin{equation*}
\widehat{\text{Rcov}}_p(X,Y)=\operatorname{tr}(\widehat{\Sigma}_{p}(X,Y)).
\end{equation*}
Moreover, an estimator for the Riemannian cross-correlation matrix can be obtained as
\begin{equation*}
\widehat{\mathcal{R}}_{p}(X,Y)=\frac{\widehat{\Sigma}_{p}(X,Y)}{\sqrt{\operatorname{tr}(\widehat{\Sigma}_{p}(X,X))} \sqrt{\operatorname{tr}(\widehat{\Sigma}_{p}(Y,Y))}},
\end{equation*}
and the corresponding estimator for $\text{Rcorr}$ is the sample Riemannian correlation
\begin{equation*}
\widehat{\text{Rcorr}}_{p}(X,Y)=\operatorname{tr}(\widehat{\mathcal{R}}_p(X,Y))=\frac{\widehat{\text{Rcov}}_p(X,Y)}{\sqrt{\widehat{\text{Rcov}}_p(X,X)} \sqrt{\widehat{\text{Rcov}}_p(Y,Y)}}.
\end{equation*}

As discussed in Section \ref{Covariance in convex geodesic balls section}, if the random objects $X$ and $Y$ share a common Fréchet mean $\mu$, it is convenient to evaluate the Riemannian covariance between them at their common mean $\mu$, as remarked in Proposition \ref{proposition Rcov(X,X) and Frechet function simple relation}.
This provides the usual interpretation of covariance and correlation in Euclidean spaces as measures of co-variability around the mean.
In the case of distinct Fréchet means, $\mu$ and $\nu$, for $X$ and $Y$, respectively, a natural point of evaluation for the Riemannian covariance is at the midpoint of the geodesic between the two means.
This does not have an immediate statistical interpretation, as in the case of the common mean, but it provides us with a local description where we can expect both variables to be well represented in a common tangent space.
This motivates the evaluation of measures at a point $p$ based on the Fréchet means. However, in practice, the true Fréchet mean(s) are not known. Therefore, we need to estimate the chosen point $p$ using an estimator $p_N$ based on the sample $(X_1, Y_1), \ldots, (X_N, Y_N)$. For instance, if $X$ and $Y$ have a unique common Fr\'echet mean $\mu$, we are interested in the Riemannian covariance $\text{Rcov}_{\mu}(X,Y)$. 
In this case, $p_N$ would be chosen as the sample Fr\'echet mean ${\widehat{\mu}_{N}}$ of the $2N$ observations $X_1, \ldots, X_N, Y_1, \ldots, Y_N$, and the estimator for $\text{Rcov}_{\mu}(X,Y)$ is given by
\begin{equation*}
\widehat{\text{Rcov}_{\mu}}(X,Y) = \widehat{\text{Rcov}}_{\widehat{\mu}_{N}}(X,Y).
\end{equation*}
More generally, if $p$ is a point on the geodesic between the Fr\'echet mean $\mu$ of $X$ and the Fr\'echet mean $\nu$ of $Y$, we can choose $p_N$ as the corresponding point on the geodesic between $\widehat{\mu}_N$ and $\\widehat{nu}_N$, where $\mu_N$ is the sample Fr\'echet mean of $X_1, \ldots, X_N$, and $\widehat{\nu}_N$ is the sample Fr\'echet mean of $Y_1, \ldots, Y_N$. The estimator for $\Sigma_{p}(X,Y)$ is then given by
\begin{equation*}
\widehat{\text{Rcov}_{p}}(X,Y) = \widehat{\text{Rcov}}_{\widehat{p}_{N}}(X,Y).
\end{equation*}
In particular, we take $\widehat{p}_N$ to be estimate for the midpoint between the means. In the next section, we will consider the convergence aspects of these estimators.

\section{Consistency results}\label{Different means: Common tangent space section}
We now discuss the consistency of $\widehat{\text{Rcov}}_{p_N}(X,Y)$ and $\widehat{\text{Rcorr}}_{p_N}(X,Y)$ as estimators for $ \text{Rcov}_{p}(X,Y)$ and $ \text{Rcorr}_{p}(X,Y)$, respectively. 
We will prove that, if $p_N$ converges to $p$ almost surely, then $\widehat{\Sigma}_{p_N}(X,Y)$ converges almost surely to $\Sigma_p(X,Y)$.
Before we proceed to prove this claim, we need the following lemmas.
\begin{lemma}\label{lemma: boundedness of Frechet function}
Let $M$ be a compact and connected Riemannian manifold and $X\colon\Omega\to M$ be random object with probability measure $Q_X$. Then for all $r>0$, $\mathcal{F}_{r,X}(p)$ attains both minimum and maximum. In particular, the Frechet function is bounded on $M$.
\end{lemma}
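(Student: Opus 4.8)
The plan is to show that $\mathcal{F}_{r,X}$ is a continuous real-valued function on the compact space $M$ and then invoke the extreme value theorem. Compactness of $M$ enters in two separate places: first to provide a uniform integrable bound on the integrands, which drives the continuity argument, and then to guarantee that a continuous function attains its extrema. The first ingredient is elementary: since $M$ is compact, the distance function $d\colon M\times M\to\RR$ is continuous and hence bounded, so setting $D=\operatorname{diam}(M)=\sup_{p,q\in M}d(p,q)<\infty$ we obtain $d^{r}(p,x)\le D^{r}$ for all $p,x\in M$ and all $r>0$. As $Q_X$ is a probability measure, the constant $D^{r}$ is $Q_X$-integrable, furnishing a single integrable dominating function for the whole family $\{x\mapsto d^{r}(p,x)\}_{p\in M}$.

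Next I would prove continuity of $p\mapsto\mathcal{F}_{r,X}(p)$. Because $M$ is metrizable it suffices to argue sequentially, so let $p_k\to p$ in $M$. For each fixed $x\in M$ the map $q\mapsto d(q,x)$ is continuous and $t\mapsto t^{r}$ is continuous on $[0,\infty)$, whence $d^{r}(p_k,x)\to d^{r}(p,x)$ pointwise in $x$. Combining this with the uniform bound $d^{r}(p_k,x)\le D^{r}$ from the previous step, the dominated convergence theorem gives $\mathcal{F}_{r,X}(p_k)=\int_M d^{r}(p_k,x)\,dQ_X(x)\to\int_M d^{r}(p,x)\,dQ_X(x)=\mathcal{F}_{r,X}(p)$. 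Hence $\mathcal{F}_{r,X}$ is continuous on $M$.

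Finally, a continuous real-valued function on a nonempty compact space attains both its infimum and its supremum, so $\mathcal{F}_{r,X}$ attains a minimum and a maximum on $M$; in particular it is bounded, which yields the concluding assertion immediately. I do not expect a genuine obstacle here: the only step demanding care is the interchange of limit and integral in the continuity argument, and that is exactly where the diameter bound $D^{r}$ is required to legitimise dominated convergence. Should one prefer to avoid measure-theoretic machinery, an alternative is to estimate $\abs{\mathcal{F}_{r,X}(p_k)-\mathcal{F}_{r,X}(p)}$ directly via the reverse triangle inequality $\abs{d(p_k,x)-d(p,x)}\le d(p_k,p)$ together with the Lipschitz bound for $t\mapsto t^{r}$ on the compact interval $[0,D]$, but the dominated-convergence route is cleaner.
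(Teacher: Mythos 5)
Your proof is correct, and its skeleton is the same as the paper's: establish that $\mathcal{F}_{r,X}$ is continuous, then use compactness of $M$ and the extreme value theorem to conclude it attains its minimum and maximum. The difference lies in how continuity is justified. The paper bounds $\abs{\mathcal{F}_{r,X}(p)-\mathcal{F}_{r,X}(q)}$ by a constant times $\sup_{x\in M}\abs{d^{r}(p,x)-d^{r}(q,x)}$ and appeals to (implicitly uniform) continuity of the distance function on the compact space $M\times M$ --- essentially the ``direct estimate'' you sketch at the end as an alternative. You instead use pointwise convergence of the integrands together with the dominated convergence theorem, with the constant $D^{r}=\operatorname{diam}(M)^{r}$ as the dominating function. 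Your route is arguably cleaner: it avoids having to make the uniformity in $x$ explicit, it needs only that $Q_X$ is a probability measure, and it works verbatim for every $r>0$. One small caveat about your closing aside: for $0<r<1$ the map $t\mapsto t^{r}$ is \emph{not} Lipschitz on $[0,D]$ (its derivative blows up at $0$); it is only $r$-H\"older, so the direct-estimate variant would need an inequality such as $\abs{a^{r}-b^{r}}\leq\abs{a-b}^{r}$ rather than a Lipschitz bound. This does not affect your main argument, which is complete as written.
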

\begin{proof}
See appendix \ref{sec:appendix}.
\end{proof}
It is worth mentioning that the proof, as presented in appendix \ref{sec:appendix}, does not use the full Riemannian geometry of $M$.
Specifically, the same argument can be used to prove the claim in the setting of compact and connected metric spaces equipped with a finite measure.

\begin{lemma}\label{lemma on almost sure convergence of log map}
Let $M$ be a compact and connected Riemannian manifold and assume that $X$ is an $M$-valued random variable.
Suppose that $X$ is compactly supported in a geodesic ball $\mathcal{B}_{p}(r)$, where $r<\text{inj}(p)$ for some $p\in M$.
If $p_n$ is a sequence that converges almost surely to $p$, then
\begin{equation*}
\log_{p_n}X\overset{a.s.}{\longrightarrow}\log_{p}X,
\end{equation*}
as $n\to\infty$.
\end{lemma}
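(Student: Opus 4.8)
The plan is to deduce the almost sure convergence from the joint smoothness of the logarithm map, treating $\log_q x$ as a function of both the base point $q$ and the argument $x$. The key conceptual point is that, although $\log_{p_n}X$ lives a priori in $T_{p_n}M$ while $\log_p X$ lives in $T_p M$, both can be read off in a single coordinate chart around $p$ in which the whole tangent bundle is trivialised, so that the asserted convergence is a genuine comparison of vectors in $\mathbb{R}^n$.

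First I would pass to a full-probability event. Since $X$ is compactly supported in $\mathcal{B}_{p}(r)$ with $r < \text{inj}(p)$, there is a compact set $K$ and a radius $r' < r$ with $K \subset \mathcal{B}_{p}(r')$ and $P(X \in K) = 1$. Intersecting this with the event $\{p_n \to p\}$, which has probability one by hypothesis, yields a set of full measure on which I argue pointwise: fix such an $\omega$, write $q_n = p_n(\omega)$ and $x = X(\omega) \in K$, and note that $q_n \to p$.

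Next comes the core geometric step: I would show that the joint logarithm $L(q,x) = \log_q x$ is defined and continuous on a neighbourhood of $\{p\}\times K$ in $M\times M$. Consider the smooth map $\Phi\colon TM \to M\times M$, $\Phi(q,v) = (q, \exp_q v)$. Near the zero section $\Phi$ is a local diffeomorphism, since $\exp_q$ is a diffeomorphism near $0 \in T_q M$, and its smooth local inverse is precisely $(q,x)\mapsto (q,\log_q x)$; this gives joint smoothness, hence continuity, of $L$. To guarantee that $\log_{q_n} x$ is even defined for large $n$, I need the normal neighbourhood of $q_n$ to contain $K$. This follows because the injectivity radius is lower semicontinuous (indeed continuous) on a complete manifold, so there is a neighbourhood $V$ of $p$ and a radius $r''$ with $r' < r'' < r$ such that $\text{inj}(q) > r''$ for all $q \in V$; shrinking $V$ so that $d(q,p)$ is small enough that $d(q,x)\le d(q,p)+r' < r''$ for every $x\in K$, we obtain $K \subset \mathcal{B}_{q}(r'')$ with $\log_q$ a well-defined diffeomorphism there for every $q\in V$.

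Finally I would conclude. Since $q_n \to p$, eventually $q_n \in V$, so $\log_{q_n} x$ is defined, and the continuity of $L$ established above gives $\log_{q_n} x \to \log_p x$ in the fixed trivialisation of $TM$ near $p$. As this holds for every $\omega$ in the full-measure event, the almost sure convergence follows. I expect the main obstacle to be exactly the uniformity of the domain of definition: one must rule out the degenerate possibility that $x$ escapes the injectivity domain of $q_n$ for infinitely many $n$, and this is where the strict inequality $r < \text{inj}(p)$ together with the (lower semi)continuity of the injectivity radius is essential. The trivialisation of $TM$ near $p$, needed to compare vectors in different tangent spaces, is a secondary but unavoidable technical point.
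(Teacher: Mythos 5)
Your proposal is correct and follows essentially the same route as the paper's proof: both establish that $\log_{p_n}X$ is eventually well defined by combining $p_n \to p$ with the (semi)continuity of the injectivity radius, and then conclude via continuity of the logarithm with respect to the base point. The only difference is one of detail: you justify the joint continuity of $(q,x)\mapsto\log_{q}x$ through the local inverse of $\Phi(q,v)=(q,\exp_{q}v)$ --- note that this must be applied on the whole injectivity domain $\{(q,v): \norm{v}_{q}<\operatorname{inj}(q)\}$, not merely ``near the zero section'', since $X$ need not be close to $p$ --- whereas the paper simply asserts this continuity.
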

\begin{proof}
See appendix \ref{sec:appendix}.
\end{proof}
The assumption that $r<$inj$(p)$ is sufficient to avoid concentration near the boundary of the maximal geodesic ball at $p$.
Now we are in position to state the main theorem and prove it.
\begin{theorem}\label{main theorem: strong consistency for Riemannian covariance}
Let $M$ be a compact and connected Riemannian $n$-manifold. Suppose that $X$ and $Y$ are $M$-valued random variables. Assume that $X$ and $Y$ are compactly supported in the geodesic ball $\mathcal{B}_{p}(r)$, where $r < \text{inj}(p)$ for some $p \in M$. Let $\{(X_k,Y_k)\}_{k=1,\ldots,N}$ be an independently and identically distributed random sample, and for each $N$, let $p_N$ be an $M$-valued function of the sample that converges almost surely to $p$ as $N \to \infty$. Then,
\begin{equation*}
\widehat{\Sigma}_{p_N}(X,Y) \overset{a.s.}{\longrightarrow} \Sigma_p(X,Y),
\end{equation*}
as $N\to\infty$. In particular, $\widehat{\text{Rcov}}_{p_N}(X,Y) \overset{a.s.}{\longrightarrow} \text{Rcov}_p(X,Y)$ as $N \to \infty$.
\end{theorem}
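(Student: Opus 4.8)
The plan is to reduce the convergence of the full matrix estimator to the convergence of its three building blocks — the second-moment sample average and the two first-moment sample averages — and then to handle each block by separating the randomness coming from the sampling from the randomness coming from the moving evaluation point $p_N$.

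First I would fix, once and for all, a smooth orthonormal frame on a neighborhood of $p$, so that for every base point $q$ near $p$ the tangent space $T_qM$ is identified with $\mathbb{R}^n$ and the assignment $(q,x)\mapsto\log_q x\in\mathbb{R}^n$ is jointly smooth; this is the same identification under which $\widehat{\Sigma}_{p_N}(X,Y)$ and $\Sigma_p(X,Y)$ are genuine elements of $\mathbb{R}^{n\times n}$, and it is implicit already in Lemma \ref{lemma on almost sure convergence of log map}. Because $X$ and $Y$ are compactly supported in $\mathcal{B}_p(r)$ with $r<\operatorname{inj}(p)$, I can choose a compact set $K\subset\mathcal{B}_p(r)$ containing both supports and, using continuity of the injectivity radius on the compact manifold $M$, a radius $\delta>0$ so small that $K\subset\mathcal{B}_q(r')$ with $r'<\operatorname{inj}(q)$ for every $q\in\overline{\mathcal{B}_p(\delta)}$. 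On the compact set $\overline{\mathcal{B}_p(\delta)}\times K$ the map $(q,x)\mapsto\log_q x$ is then well defined, continuous, hence bounded by some constant $C$ and, crucially, uniformly continuous.

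Next I would write $\widehat{\Sigma}_{p_N}=A_N-b_N c_N^T$ where $A_N=\tfrac1N\sum_k(\log_{p_N}X_k)(\log_{p_N}Y_k)^T$, $b_N=\tfrac1N\sum_k\log_{p_N}X_k$, $c_N=\tfrac1N\sum_k\log_{p_N}Y_k$, and similarly $\Sigma_p=A-bc^T$ for the population quantities. Since matrix multiplication and subtraction are continuous, it suffices to prove $A_N\to A$, $b_N\to b$, $c_N\to c$ almost surely. Each is of the same type, so for $A_N$ set $g_q(x,y)=(\log_q x)(\log_q y)^T$ and split
\begin{equation*}
A_N-A=\Big(\tfrac1N\textstyle\sum_k\big[g_{p_N}(X_k,Y_k)-g_{p}(X_k,Y_k)\big]\Big)+\Big(\tfrac1N\textstyle\sum_k g_{p}(X_k,Y_k)-A\Big)=:I_N+II_N.
\end{equation*}
The term $II_N$ is evaluated at the fixed point $p$, so its summands are i.i.d.\ matrices whose entries are bounded (by $C^2$) on $K\times K$ and hence integrable; Kolmogorov's strong law of large numbers gives $II_N\to0$ almost surely entrywise. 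For $I_N$ I would use the telescoping identity $g_q(x,y)-g_p(x,y)=(\log_q x)(\log_q y-\log_p y)^T+(\log_q x-\log_p x)(\log_p y)^T$: given $\varepsilon>0$, uniform continuity yields $\delta'\le\delta$ with $\sup_{x\in K}\|\log_q x-\log_p x\|<\varepsilon$ whenever $d(q,p)<\delta'$, so $\|g_q(x,y)-g_p(x,y)\|\le 2C\varepsilon$ for all $(x,y)\in K\times K$ and all such $q$. The point worth stressing is that this bound is uniform in the base point $q$ and in the data, so it applies with $q=p_N$ on the event $\{d(p_N,p)<\delta'\}$ irrespective of the (possibly intricate) dependence of $p_N$ on the sample; hence $\|I_N\|\le 2C\varepsilon$ there. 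Since $p_N\to p$ almost surely, for almost every $\omega$ this event holds for all large $N$, giving $\limsup_N\|I_N\|\le 2C\varepsilon$ almost surely, and letting $\varepsilon\downarrow0$ yields $I_N\to0$. The identical argument applies to $b_N$ and $c_N$, whence $\widehat{\Sigma}_{p_N}\to\Sigma_p$ almost surely, and taking the continuous trace gives the stated convergence of $\widehat{\text{Rcov}}_{p_N}$.

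I expect the main obstacle to be the $I_N$ term: one must promote the pointwise continuity of the logarithm in its base point (Lemma \ref{lemma on almost sure convergence of log map}) to genuinely uniform control over both the base point and the data simultaneously, which is precisely what allows the moving, sample-dependent point $p_N$ to be substituted into a deterministic bound and thereby decouples $p_N$ from the $X_k,Y_k$. Establishing that the logarithm remains well defined and uniformly continuous on a full compact neighborhood $\overline{\mathcal{B}_p(\delta)}\times K$ — in particular that the slack $r<\operatorname{inj}(p)$ survives for base points near $p$ — is the geometric heart of the argument; once it is in place, the probabilistic part is a routine combination of the strong law with the uniform estimate.
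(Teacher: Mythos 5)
Your proof is correct, and its overall architecture is the same as the paper's: decompose the estimator into sample averages, apply the strong law of large numbers to the averages evaluated at the fixed point $p$, and control the discrepancy created by replacing $p$ with the random point $p_N$ via continuity of the logarithm in its base point, with the statement for $\widehat{\text{Rcov}}$ following from continuity of the trace. The difference lies in how the discrepancy term (your $I_N$) is handled, and there your version is tighter than the paper's. The paper deduces from Lemma \ref{lemma on almost sure convergence of log map} that, outside a null set, $|a^{ij}_{N,k}(\omega)-c^{ij}_k(\omega)|<\varepsilon^*$ for all $N>N^*(\omega)$ \emph{and all} $k\le N$; but that lemma, as stated, gives only pointwise almost-sure convergence for a single random variable, and a bound that is uniform over the triangular-array index $k$ does not follow from it without an additional argument (a priori the convergence rate could degrade in $k$). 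Your construction of the compact product $\overline{\mathcal{B}_p(\delta)}\times K$ --- with $\delta$ chosen using continuity of the injectivity radius so that $\log_q$ remains well defined for every base point $q$ near $p$ --- and the resulting boundedness and uniform continuity of $(q,x)\mapsto\log_q x$ supplies exactly the missing ingredient: it converts the estimate on $\|g_{p_N}(X_k,Y_k)-g_p(X_k,Y_k)\|$ into a deterministic bound valid on the event $\{d(p_N,p)<\delta'\}$, uniformly in $k$ and irrespective of how $p_N$ depends on the sample. In short, you take the same route as the paper but make rigorous the uniform-in-$k$ control that the paper's proof asserts only implicitly, which is precisely the delicate point of the theorem.
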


\begin{proof}
For each $p_N$, the estimator $\widehat{\Sigma}_{p_{N}}(X,Y)$ is given by
\begin{equation*}
\widehat{\Sigma}_{p_{N}}(X,Y) = \frac{1}{N} \sum_{k=1}^N \left( \log_{p_N} X_k \right) \left( \log_{p_N} Y_k \right)^T - \left( \frac{1}{N} \sum_{k=1}^N \log_{p_N} X_k \right) \left( \frac{1}{N} \sum_{k=1}^N \log_{p_N} Y_k \right)^T.
\end{equation*}

The $ij$th element of $\widehat{\Sigma}_{p_{N}}(X,Y)$ is
\begin{equation*}
\widehat{\Sigma}_{p_{N}}(X,Y)_{ij} = \frac{1}{N} \sum_{k=1}^N (\log_{p_N} X^i_k) (\log_{p_N} Y^j_k) - \left( \frac{1}{N} \sum_{k=1}^N \log_{p_N} X^i_k \right) \left( \frac{1}{N} \sum_{k=1}^N \log_{p_N} Y^j_k \right).
\end{equation*}

We aim to prove $\widehat{\Sigma}_{p_N}(X,Y)_{ij} \overset{a.s.}{\longrightarrow} \Sigma_{p}(X,Y)_{ij}$, for all $i,j=1,\ldots,n$.

As $p_{N}$ converges almost surely to $p$, from Lemma \ref{lemma on almost sure convergence of log map} and the continuous mapping theorem, for all $k\in\mathbb{N}$, $\log_{p_N}X_k \overset{a.s.}{\to} \log_p X_k$, $\log_{p_N} Y_k \overset{a.s.}{\to} \log_p Y_k$, and 
\begin{equation*}
(\log_{p_N} X_k)^T (\log_{p_N} Y_k) \overset{a.s.}{\longrightarrow} \left( \log_{p} X_k \right)^T \left( \log_{p} Y_k \right)
\end{equation*}
as $N \to \infty$. This implies the almost sure convergence component-wise
\begin{equation*}
\log_{p_N} X^i_k \log_{p_N} Y^j_k \overset{a.s.}{\longrightarrow} \log_{p} X_k^i \log_{p} Y_k^j,
\end{equation*}
as $N \to \infty$.
Note that, using Cauchy-Schwarz inequality and Lemma \ref{lemma: boundedness of Frechet function}
\begin{equation*}
E \left[ \left| \log_{p} X_{k}^i \log_{p} Y^{j}_{k} \right| \right] \leq E \left[ \left| \log_{p} X_{k}^i \right|^2 \right]^{1/2} E \left[ \left| \log_{p} Y_{k}^j \right|^2 \right]^{1/2} = \mathcal{F}_{2,X}(p)^{1/2} \mathcal{F}_{2,Y}(p)^{1/2} < \infty.
\end{equation*}
Thus, we can invoke Etemadi's strong law of large numbers,
\begin{equation} \label{strong-law-of-numbers-for-c_k}
\frac{1}{N} \sum_{k=1}^{N} \left[ \log_{p} X_{k}^i \log_{p} Y_{k}^j \right] \overset{a.s.}{\longrightarrow} E [\log_{p} X^i_1 \log_{p} Y^j_1] = \Sigma_{p}(X,Y)_{ij}.
\end{equation}
    
On the other hand, using Lemma \ref{lemma: boundedness of Frechet function}
\begin{equation*}
E [\left| \log_p X_k^i \right|]\leq E [\| \log_p X_k \|^{1/2}] \leq \mathcal{F}_{\frac{1}{2},X}(p) < \infty.
\end{equation*}
So, by the Strong Law of Large Numbers,
\begin{equation*}
\frac{1}{N} \sum_{k=1}^N \log_{p} X_k^i \overset{a.s.}{\longrightarrow} E [\log_{p} X^i_1],
\end{equation*}
as $N \to \infty$, for each $i=1,\ldots,n$. A similar observation holds for $\frac{1}{N} \sum_{k=1}^N \log_{p} Y_k^{j}$, ${j=1,\ldots,n}$. So if we can prove
\begin{equation} \label{conv-of-log_p_N-X-log_p_N-Y}
\frac{1}{N} \sum_{k=1}^N \log_{p_N} X_k^i \log_{p_N} Y_k^j \overset{a.s.}{\longrightarrow} \frac{1}{N} \sum_{k=1}^N \log_{p} X_k^i \log_{p} Y_k^{j},
\end{equation}
\begin{equation} \label{conv-of-log_p_N-X-log_p_N-X}
\frac{1}{N} \sum_{k=1}^N \log_{p_N} X^i_k \overset{a.s.}{\longrightarrow} \frac{1}{N} \sum_{k=1}^N \log_{p} X^i_k,
\end{equation}
\begin{equation} \label{conv-of-log_p_N-Y-log_p_N-Y}
\frac{1}{N} \sum_{k=1}^N \log_{p_N} Y^j_k \overset{a.s.}{\longrightarrow} \frac{1}{N} \sum_{k=1}^N \log_{p} Y^j_k,
\end{equation}
the assertion of the theorem follows, using the strong law of large numbers. We prove (\ref{conv-of-log_p_N-X-log_p_N-Y}) as (\ref{conv-of-log_p_N-X-log_p_N-X}) and (\ref{conv-of-log_p_N-Y-log_p_N-Y}) follow similar argument. For simplicity, put
\begin{equation*}
a_{N,k}^{ij} = (\log_{p_N} X^i_k) (\log_{p_N} Y^j_k),
\end{equation*}
and
\begin{equation*}
c_{k}^{ij} = \left( \log_{p} X^i_k \right) \left( \log_{p} Y^j_k \right).
\end{equation*}
If we can show that as $N \to \infty$
\begin{equation} \label{strong-consistency-transitive-convergence-statement}
\frac{1}{N} \sum_{k=1}^{N} a^{ij}_{N,k} \overset{a.s.}{\longrightarrow} \frac{1}{N} \sum_{k=1}^{N} c^{ij}_{k},
\end{equation}
the claim of the theorem follows, since we have established that
\begin{equation*}
\frac{1}{N} \sum_{k=1}^{N} c^{ij}_{k} = \frac{1}{N} \sum_{k=1}^{N} \left[ \log_{p} X_{k}^i \log_{p} Y_{k}^j \right] \overset{a.s.}{\longrightarrow} E [\log_{p} X^i_1 \log_{p} Y^j_1] = \Sigma_{p}(X,Y)_{ij},
\end{equation*}
as in (\ref{strong-law-of-numbers-for-c_k}). We need to show is that for all $\varepsilon > 0$, there exists $A \in \Sigma$ with $P(A) = 0$ such that for any $\omega \in \Omega \setminus A$ there exists $N^* \in \mathbb{N}$ such that for all $N > N^*$
\begin{equation*}
P \left( \left| \frac{1}{N} \sum_{k=1}^N a^{ij}_{k,N}(\omega) - \frac{1}{N} \sum_{k=1}^N c^{ij}_{k}(\omega) \right| < \varepsilon \right) = P \left( \left| \frac{1}{N} \sum_{k=1}^N \left( a^{ij}_{k,N}(\omega) - c^{ij}_{k}(\omega) \right) \right| < \varepsilon \right) = 1.
\end{equation*}
By set inclusions, we have the estimate
\begin{equation} \label{strong-consistency-common-mean-estimate-for-probability-1}
P \left( \left| \frac{1}{N} \sum_{k=1}^N a^{ij}_{k,N}(\omega) - c^{ij}_k(\omega) \right| < \varepsilon \right) \geq P \left( \frac{1}{N} \sum_{k=1}^N \left| a^{ij}_{k,N}(\omega) - c^{ij}_k(\omega) \right| < \varepsilon \right),
\end{equation}
for any $\varepsilon > 0.$ By Lemma \ref{lemma on almost sure convergence of log map}, we know that there exists a $P$-null set $A$ such that for all $\omega \in \Omega \setminus A$ and for all $\varepsilon^* > 0$, there exists $N^*(\omega) \in \mathbb{N}$ such that
\begin{equation*}
\left| a^{ij}_{N,k}(\omega) - c^{ij}_{k}(\omega) \right| < \varepsilon^*,
\end{equation*}
for all $N > N^*(\omega)$ and for all $k \leq N$. Now this implies
\begin{equation*}
\frac{1}{N} \sum_{k=1}^{N} \left| a^{ij}_{N,k}(\omega) - c^{ij}_{k}(\omega) \right| < \varepsilon^*,
\end{equation*}
for $\omega \in \Omega \setminus A$ and $N > N^*(\omega)$. By setting $\varepsilon^* < \varepsilon$, we have
\begin{equation*}
P \left( \frac{1}{N} \sum_{k=1}^N \left| a^{ij}_{N,k}(\omega) - c^{ij}_k(\omega) \right| < \varepsilon \right) \geq P \left( \frac{1}{N} \sum_{k=1}^N \left| a^{ij}_{N,k}(\omega) - c^{ij}_k(\omega) \right| < \varepsilon^* \right) = 1,
\end{equation*}
for all $N > N^{*}(\omega)$. This gives
\begin{equation*}
P \left( \frac{1}{N} \sum_{k=1}^N \left| a^{ij}_{N,k}(\omega) - c^{ij}_k(\omega) \right| < \varepsilon \right) = 1,
\end{equation*}
for all $N > N^*$ and $\omega \in \Omega \setminus A$. Using this in estimate (\ref{strong-consistency-common-mean-estimate-for-probability-1}), we get
\begin{equation*}
P \left( \left| \frac{1}{N} \sum_{k=1}^N \left( a^{ij}_{N,k}(\omega) - c^{ij}_k(\omega) \right) \right| < \varepsilon \right) = 1,
\end{equation*}
for all $N > N^{*}(\omega)$, where $\omega \in \Omega \setminus A$. So,
\begin{equation*}
\frac{1}{N} \sum_{k=1}^N a^{ij}_{N,k} \overset{a.s.}{\longrightarrow} \frac{1}{N} \sum_{k=1}^{N} c^{ij}_{k}.
\end{equation*}
Therefore, by (\ref{strong-consistency-transitive-convergence-statement}),
\begin{equation*}
\widehat{\Sigma}_{p_{N}}(X,Y)_{ij} = \frac{1}{N} \sum_{k=1}^N a^{ij}_{N,k} \overset{a.s.}{\longrightarrow} E [c^{ij}_1].
\end{equation*}
The same argument holds for $a_{N,k}^i = \log_{p_N} X^i_k$ and $c_k^{i} = \log_{p} X^i_k$ and also for $Y_k^j$, $i,j=1,\ldots,n$. So we have $\widehat{\Sigma}_{p_{N}}(X,Y) \overset{a.s.}{\longrightarrow} \Sigma_{p}(X,Y)$. Finally, the convergence of $\widehat{Rcov}_{p_N}$ follows from the continuity of the trace as a function.
\end{proof}

\begin{corollary}\label{strong consistency for correlations}
Under the same assumptions of Theorem \ref{main theorem: strong consistency for Riemannian covariance},
The estimator $\widehat{\mathcal{R}}_{p_N}(X,Y)\overset{a.s.}{\longrightarrow}\mathcal{R}_p(X,Y)$ and $\widehat{\text{Rcorr}}_{p_N}(X,Y)\overset{a.s.}{\longrightarrow}\text{Rcorr}_{p}(X,Y)$, as $N\to\infty$.
\end{corollary}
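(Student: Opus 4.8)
The plan is to deduce the corollary from Theorem \ref{main theorem: strong consistency for Riemannian covariance} by an application of the continuous mapping theorem, so that essentially no new analysis is needed beyond what the theorem already provides. First I would apply the theorem not only to the pair $(X,Y)$ but also, separately, to the pairs $(X,X)$ and $(Y,Y)$: the hypotheses are symmetric in the two arguments, so all three applications are licensed and yield
$$\widehat{\Sigma}_{p_N}(X,Y)\overset{a.s.}{\to}\Sigma_p(X,Y),\quad \widehat{\Sigma}_{p_N}(X,X)\overset{a.s.}{\to}\Sigma_p(X,X),\quad \widehat{\Sigma}_{p_N}(Y,Y)\overset{a.s.}{\to}\Sigma_p(Y,Y).$$
Taking traces and using that the trace is continuous, these give in particular $\widehat{\text{Rcov}}_{p_N}(X,X)\overset{a.s.}{\to}\text{Rcov}_p(X,X)$ and $\widehat{\text{Rcov}}_{p_N}(Y,Y)\overset{a.s.}{\to}\text{Rcov}_p(Y,Y)$.

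Next I would observe that the map $(S,a,b)\mapsto S/(\sqrt{a}\,\sqrt{b})$, from $\RR^{n\times n}\times(0,\infty)\times(0,\infty)$ into $\RR^{n\times n}$, is continuous. The sample cross-correlation matrix $\widehat{\mathcal{R}}_{p_N}(X,Y)$ is precisely the image of the triple $\bigl(\widehat{\Sigma}_{p_N}(X,Y),\,\widehat{\text{Rcov}}_{p_N}(X,X),\,\widehat{\text{Rcov}}_{p_N}(Y,Y)\bigr)$ under this map, while the population quantity $\mathcal{R}_p(X,Y)$ is the image of the corresponding limiting triple. The continuous mapping theorem then delivers $\widehat{\mathcal{R}}_{p_N}(X,Y)\overset{a.s.}{\to}\mathcal{R}_p(X,Y)$, and composing once more with the (continuous) trace yields $\widehat{\text{Rcorr}}_{p_N}(X,Y)\overset{a.s.}{\to}\text{Rcorr}_p(X,Y)$.

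The one delicate point, and the main obstacle, is that the map above is defined and continuous only where the two denominators are strictly positive, so I must check that the limiting denominators $\text{Rcov}_p(X,X)$ and $\text{Rcov}_p(Y,Y)$ do not vanish. Here I would invoke Proposition \ref{proposition Rcov(X,X) and Frechet function simple relation}: $\text{Rcov}_p(X,X)=\operatorname{tr}(\Sigma_p(X,X))$ is the sum of the coordinate variances of $\log_p X$, which is zero only when $\log_p X$ is almost surely constant, i.e.\ when $X$ is almost surely a single point. Excluding this degenerate case as a standing non-degeneracy assumption, both limits are strictly positive; and since $\widehat{\text{Rcov}}_{p_N}(X,X)$ and $\widehat{\text{Rcov}}_{p_N}(Y,Y)$ converge almost surely to these positive limits, on a probability-one event they are eventually bounded away from zero. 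Thus for all sufficiently large $N$ the argument of the continuous map lies in the open region where it is well-behaved, which is exactly what is required to apply the continuous mapping theorem and complete the proof.
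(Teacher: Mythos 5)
Your proof is correct and takes essentially the same route as the paper's, which likewise treats $\widehat{\mathcal{R}}_{p_N}$ and $\widehat{\text{Rcorr}}_{p_N}$ as continuous functions of $\widehat{\Sigma}_{p_N}$ and $\widehat{\text{Rcov}}_{p_N}$ and invokes Theorem \ref{main theorem: strong consistency for Riemannian covariance} together with the continuous mapping theorem. Your explicit check that the limiting denominators $\text{Rcov}_p(X,X)$ and $\text{Rcov}_p(Y,Y)$ are strictly positive is a point the paper leaves implicit; note that your non-degeneracy assumption is automatically satisfied here, since the paper's standing assumption that all $M$-valued random variables have distributions absolutely continuous with respect to the volume measure already excludes almost surely constant variables.
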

\begin{proof}
Both $\widehat{\mathcal{R}}_{p_N}$ and $\widehat{\text{Rcorr}}_{p_N}$ are the compositions of continuous functions of $\widehat{\Sigma}_{p_N}$ and $\widehat{\text{Rcov}}_{p_N}$.
By Theorem \ref{main theorem: strong consistency for Riemannian covariance} and the continous mapping theorem the claim follows.
\end{proof}
Let's now consider the two scenarios for the Fréchet means and the choices for $p$ as we discussed in Section \ref{sec:estimators}. 
The first scenario is when the random objects share a common and unique Fréchet mean.
\begin{proposition}\label{Rcorr at Frechet sample mean convergence}
Let $M$ be a compact and connected Riemannian $n$-manifold. 
Suppose $X$ and $Y$ are $M$-valued random variables with a common unique Fréchet mean $\mu$ such that $X$ and $Y$ are compactly supported in the geodesic ball $\mathcal{B}_{\mu}(r)$, where $r<\text{inj}(\mu)$. If $\set{(X_k,Y_k)}{k=1,\ldots,N}$ is a random sample and $\widehat{\mu}_N$ is the Fréchet sample mean, then
\begin{equation*}
\widehat{\Sigma}_{\widehat{\mu}_N}(X,Y)\overset{a.s.}{\longrightarrow}\Sigma_{\mu}(X,Y).
\end{equation*}
Similarly, $\widehat{\text{Rcov}}_{\widehat{\mu}_N}(X,Y)\overset{a.s.}{\rightarrow}\text{Rcov}_{\mu}(X,Y)$, $\widehat{\mathcal{R}}_{\widehat{\mu}_N}(X,Y)\overset{a.s.}{\rightarrow}\mathcal{R}_{\mu}(X,Y)$, and $\widehat{\text{Rcorr}}_{\widehat{\mu}_N}(X,Y)\overset{a.s.}{\rightarrow}\text{Rcorr}_{\mu}(X,Y)$.

\end{proposition}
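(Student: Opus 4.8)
The plan is to reduce the proposition to a direct application of Theorem \ref{main theorem: strong consistency for Riemannian covariance}. The support hypothesis here ($X$ and $Y$ compactly supported in $\mathcal{B}_\mu(r)$ with $r < \text{inj}(\mu)$) is precisely the hypothesis of that theorem with $p = \mu$, so the only thing that needs to be verified is that the chosen evaluation point $p_N = \widehat{\mu}_N$ converges almost surely to $\mu$. Once $\widehat{\mu}_N \overset{a.s.}{\to} \mu$ is in hand, Theorem \ref{main theorem: strong consistency for Riemannian covariance} yields $\widehat{\Sigma}_{\widehat{\mu}_N}(X,Y) \overset{a.s.}{\to} \Sigma_\mu(X,Y)$ and hence $\widehat{\text{Rcov}}_{\widehat{\mu}_N}(X,Y) \overset{a.s.}{\to} \text{Rcov}_\mu(X,Y)$, while the convergences of $\widehat{\mathcal{R}}_{\widehat{\mu}_N}$ and $\widehat{\text{Rcorr}}_{\widehat{\mu}_N}$ follow immediately from Corollary \ref{strong consistency for correlations}.

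So the heart of the argument is to show $\widehat{\mu}_N \overset{a.s.}{\to} \mu$. Recall that $\widehat{\mu}_N$ is the Fréchet sample mean of the pooled sample $X_1, \ldots, X_N, Y_1, \ldots, Y_N$, i.e. the minimizer over $M$ of
\[
\widehat{\mathcal{F}}_{2N}(p) = \frac{1}{2N} \sum_{k=1}^N \left[ d^2(X_k, p) + d^2(Y_k, p) \right].
\]
First I would identify its population counterpart as the pooled Fréchet function $\mathcal{F}_{\text{pool}}(p) = \tfrac12 \mathcal{F}_X(p) + \tfrac12 \mathcal{F}_Y(p)$ associated with the mixture $\tfrac12(Q_X + Q_Y)$. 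Since, by assumption, $\mathcal{F}_X$ and $\mathcal{F}_Y$ are each uniquely minimized at $\mu$, for every $q \neq \mu$ we have both $\mathcal{F}_X(q) > \mathcal{F}_X(\mu)$ and $\mathcal{F}_Y(q) > \mathcal{F}_Y(\mu)$, so the strict inequality passes to their average; hence $\mu$ is the unique minimizer of $\mathcal{F}_{\text{pool}}$, i.e. the unique Fréchet mean of the pooled distribution.

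It remains to upgrade this to almost sure convergence of the empirical minimizer, which I expect to be the main technical obstacle. The pairs $(X_k, Y_k)$ are i.i.d., so for each fixed $p$ the empirical pooled Fréchet function $\widehat{\mathcal{F}}_{2N}(p)$ is an average of the i.i.d. bounded random variables $\tfrac12\big(d^2(X_k,p) + d^2(Y_k,p)\big)$, and the strong law of large numbers gives $\widehat{\mathcal{F}}_{2N}(p) \overset{a.s.}{\to} \mathcal{F}_{\text{pool}}(p)$ pointwise (boundedness of $d^2$ on the compact $M$, together with Lemma \ref{lemma: boundedness of Frechet function}, secures the required finiteness). The delicate point is to promote this to a uniform statement over the compact manifold $M$: using the continuity and uniform boundedness of $d^2$ I would obtain uniform almost sure convergence of $\widehat{\mathcal{F}}_{2N}$ to $\mathcal{F}_{\text{pool}}$, and then the standard argmin-consistency argument (uniform convergence of the objective together with a unique minimizer of the limit) forces $\widehat{\mu}_N \overset{a.s.}{\to} \mu$. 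This is exactly the mechanism behind the strong consistency of the Fréchet sample mean cited in Section 2 (Theorem 4.2.4 of \cite{patrangenaru2016nonparametric}), adapted to the pooled sample; with $\widehat{\mu}_N \overset{a.s.}{\to} \mu$ established, the reduction to Theorem \ref{main theorem: strong consistency for Riemannian covariance} completes the proof.
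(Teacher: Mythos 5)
Your proposal is correct, and its overall architecture is the same as the paper's: both reduce the proposition to Theorem \ref{main theorem: strong consistency for Riemannian covariance} and Corollary \ref{strong consistency for correlations} by taking $p_N=\widehat{\mu}_N$, so that the only substantive point is the almost sure convergence $\widehat{\mu}_N\overset{a.s.}{\to}\mu$. The two arguments diverge on how that point is secured. The paper dispatches it in one line by citing the strong consistency theorem for Fr\'echet sample means (Theorem 4.2.2 in \cite{patrangenaru2016nonparametric}), whereas you re-derive it for the pooled sample: you identify the population objective as the Fr\'echet function of the mixture $\tfrac12(Q_X+Q_Y)$, note that $\mu$ uniquely minimizes it because it uniquely minimizes both $\mathcal{F}_X$ and $\mathcal{F}_Y$, and then run the strong-law-plus-uniform-convergence argmin argument, with the averages taken over the i.i.d.\ \emph{pairs} $(X_k,Y_k)$. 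Your longer route actually buys something: the pooled observations $X_1,\ldots,X_N,Y_1,\ldots,Y_N$ are neither identically distributed (only the Fr\'echet means of $Q_X$ and $Q_Y$ are assumed to coincide) nor mutually independent ($X_k$ and $Y_k$ form a dependent pair), so the off-the-shelf i.i.d.\ consistency theorem that the paper invokes does not apply verbatim; your mixture/paired-variables formulation is exactly the adaptation needed to make that step rigorous, and it is the mechanism the cited theorem's proof (Ziezold, Bhattacharya--Patrangenaru) generalizes to. Two small caveats remain in your sketch, shared with the paper: the uniform-convergence and argmin-consistency steps are asserted rather than proved (reasonable, as they are standard), and for finite $N$ the pooled sample Fr\'echet mean need not be unique, so the statement should be read as holding for any measurable selection $\widehat{\mu}_N$ from the sample mean set.
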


\begin{proof}
Using the strong consistency result for $\widehat{\mu}_N$, Theorem 4.2.2 in \cite{patrangenaru2016nonparametric}, we know that $\widehat{\mu}_N\overset{a.s.}{\longrightarrow}\mu$. The claims follow by taking $p_N=\widehat{\mu}_N$ in Theorem \ref{main theorem: strong consistency for Riemannian covariance} and Corollary \ref{strong consistency for correlations}.
\end{proof}
On the other hand, if the Fréchet means of $X$ and $Y$ are $\mu$ and $\nu$, respectively, we aim to estimate the Riemannian covariance and correlation at $m$, where $m$ is the midpoint between $\mu$ and $\nu$.
However, in order for the midpoint to be uniquely defined, we impose some condition on the Fréchet means.
Namely, we assume that $\mu$ and $\nu$ lie within an open convex set.
\begin{proposition}\label{Rcorr at midpoint of Frechet sample menas convergence}
Let $M$ be a compact and connected Riemannian $n$-manifold. Suppose $X,Y\colon\Omega\to M$ are $M$-valued random variables such that $\mu$ is the unique Fréchet mean of $X$ and $\nu$ is the unique Fréchet mean of $Y$.
Suppose that $\mu$ and $\nu$ are contained in an open convex set $\mathcal{C}$.
Let $\gamma\colon[0,1]\to M$ is the unique length-minimising geodesic connecting $\mu$ and $\nu$ with midpoint $m=\gamma(1/2)$ such that both $X$ and $Y$ are compactly supported in $\mathcal{B}_m(r)$ where $r<\text{inj}(m)$.
Assume that $\set{(X_k,Y_k)}{k=1,\ldots,N}$ is a random sample, $\widehat{\mu}_N$ is the Fréchet sample mean of $X_1,\ldots,X_N$, and $\widehat{\nu}_N$ is the Fréchet sample mean of $Y_1,\ldots,Y_N$.
If $\widehat{m}_N$ is the midpoint of $\widehat{\mu}_N$ and $\widehat{\nu}_N$, then
\begin{equation*}
\widehat{\Sigma}_{\widehat{m}_N}(X,Y)\overset{a.s.}{\longrightarrow}\Sigma_{m}(X,Y).
\end{equation*}
Similarly, $\widehat{\text{Rcov}}_{\widehat{m}_N}(X,Y)\overset{a.s.}{\rightarrow}\text{Rcov}_{m}(X,Y)$, $\widehat{\mathcal{R}}_{\widehat{m}_N}(X,Y)\overset{a.s.}{\rightarrow}\mathcal{R}_{m}(X,Y)$, and $\widehat{\text{Rcorr}}_{\widehat{m}_N}(X,Y)\overset{a.s.}{\rightarrow}\text{Rcorr}_{m}(X,Y)$.
\end{proposition}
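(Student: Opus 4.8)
The plan is to reduce the statement to a single application of Theorem~\ref{main theorem: strong consistency for Riemannian covariance} by verifying its one hypothesis on the evaluation point, namely that the point estimate $\widehat{m}_N$ converges almost surely to $m$. Once $\widehat{m}_N \overset{a.s.}{\longrightarrow} m$ is established, taking $p_N = \widehat{m}_N$ and $p = m$ in Theorem~\ref{main theorem: strong consistency for Riemannian covariance} immediately yields $\widehat{\Sigma}_{\widehat{m}_N}(X,Y) \overset{a.s.}{\longrightarrow} \Sigma_m(X,Y)$ and $\widehat{\text{Rcov}}_{\widehat{m}_N}(X,Y) \overset{a.s.}{\longrightarrow} \text{Rcov}_m(X,Y)$, while Corollary~\ref{strong consistency for correlations} supplies the corresponding statements for $\widehat{\mathcal{R}}_{\widehat{m}_N}$ and $\widehat{\text{Rcorr}}_{\widehat{m}_N}$. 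The support hypotheses required by the main theorem are exactly the standing assumption that $X$ and $Y$ are compactly supported in $\mathcal{B}_m(r)$ with $r < \operatorname{inj}(m)$, so no additional work is needed there.

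To obtain the convergence of the evaluation point, I would first record the consistency of the two marginal sample Fréchet means. Since $X$ has a unique Fréchet mean $\mu$ and $Y$ has a unique Fréchet mean $\nu$, the strong consistency of the Fréchet sample mean (Theorem 4.2.2 in \cite{patrangenaru2016nonparametric}) gives $\widehat{\mu}_N \overset{a.s.}{\longrightarrow} \mu$ and $\widehat{\nu}_N \overset{a.s.}{\longrightarrow} \nu$, exactly as in the proof of Proposition~\ref{Rcorr at Frechet sample mean convergence}.

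The crux of the argument is then to pass from the convergence of the endpoints to the convergence of their midpoint, and this is precisely where the convexity hypothesis on $\mathcal{C}$ enters. Because $\mathcal{C}$ is an open convex set, any two points $x, y \in \mathcal{C}$ are joined by a unique length-minimising geodesic lying in $\mathcal{C}$, so the midpoint map
\begin{equation*}
\Phi \colon \mathcal{C} \times \mathcal{C} \to M, \qquad \Phi(x,y) = \exp_x\!\left(\tfrac{1}{2}\log_x y\right),
\end{equation*}
is well-defined; it is continuous, being a composition of the smooth exponential map with the joint logarithm $(x,y) \mapsto \log_x y$, which is continuous on the convex region as the inverse of the smooth diffeomorphism $(x,v) \mapsto (x, \exp_x v)$. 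Since $\mathcal{C}$ is open and $\widehat{\mu}_N \to \mu \in \mathcal{C}$, $\widehat{\nu}_N \to \nu \in \mathcal{C}$ almost surely, for almost every sample path there is an index beyond which both $\widehat{\mu}_N$ and $\widehat{\nu}_N$ lie in $\mathcal{C}$; for such $N$ the midpoint $\widehat{m}_N = \Phi(\widehat{\mu}_N, \widehat{\nu}_N)$ is unambiguously defined, and the continuous mapping theorem applied to $\Phi$ gives $\widehat{m}_N \overset{a.s.}{\longrightarrow} \Phi(\mu,\nu) = m$. Combining this with the reduction of the first paragraph completes the proof. The main obstacle is therefore the well-definedness and continuity of the midpoint map under the convexity assumption: all the probabilistic content has already been absorbed into Theorem~\ref{main theorem: strong consistency for Riemannian covariance} and the consistency of the marginal means, so the only genuinely new point is that the geometric construction of $m$ is stable under perturbation of its endpoints.
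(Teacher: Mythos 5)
Your proposal is correct and follows essentially the same route as the paper: establish $\widehat{\mu}_N \overset{a.s.}{\to} \mu$ and $\widehat{\nu}_N \overset{a.s.}{\to} \nu$ via the strong consistency of sample Fr\'echet means, deduce $\widehat{m}_N \overset{a.s.}{\to} m$ from the continuity of the midpoint map $(x,y) \mapsto \exp_x\left(\tfrac{1}{2}\log_x y\right)$ on $\mathcal{C}\times\mathcal{C}$, and then invoke Theorem~\ref{main theorem: strong consistency for Riemannian covariance} with $p_N = \widehat{m}_N$ together with Corollary~\ref{strong consistency for correlations}. Your justification of the continuity of the midpoint map (as built from the inverse of $(x,v)\mapsto(x,\exp_x v)$) is in fact slightly more detailed than the paper's, which simply asserts it.
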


\begin{proof}
Since the Fréchet means $\mu$ and $\nu$ are contained in the open convex set $\mathcal{C}$,  there exist neighborhoods $U$ of $\mu$ and $V$ of $\nu$ such that for all $p \in U$ and $q \in V$, there exists a unique length-minimising geodesic between them. Consequently, there exists a unique midpoint between $p$ and $q$. 
Since $\widehat{\mu}_N$ and $\widehat{\nu}_N$ converge almost surely to $\mu$ and $\nu$, respectively, there exists $N_0$ such that for all $N > N_0$ we have $P(\widehat{\mu}_N \in U) = P(\widehat{\nu}_N \in V) = 1$.

Now consider the sequence of length-minimising geodesics $\gamma_N(t)$ from $\widehat{\mu}_N$ to $\widehat{\nu}_N$. Assume that the midpoint, $\widehat{m}_N$, of $\gamma_N(t)$ is attained when $t = 1/2$. Then the midpoint of $\gamma_N(t)$ is given by
\begin{equation*}
\gamma_N(1/2) = \widehat{m}_N = \exp_{\widehat{\mu}_N}\left(\frac{1}{2}\log_{\widehat{\mu}_N}\widehat{\nu}_N\right).
\end{equation*}
Note that the function
\begin{equation*}
h(p, q) = \exp_{p}\left(\frac{1}{2}\log_{p}q\right),
\end{equation*}
is continuous on $\mathcal{C} \times \mathcal{C}$. Since $\widehat{m}_N = h(\widehat{\mu}_N, \widehat{\nu}_N)$ and $m = h(\mu, \nu)$, the continuity of $h$ implies $\widehat{m}_N \overset{a.s.}{\longrightarrow} m$ as $N \to \infty$.

Because the supports of $X$ and $Y$ are contained in $\mathcal{B}_m(r)$, $r < \text{inj}(m)$, the requirements of Theorem \ref{main theorem: strong consistency for Riemannian covariance} hold for $p_N = \widehat{m}_N$. Therefore, the almost sure convergence of $\widehat{\Sigma}_{\widehat{m}_N}(X, Y)$ to $\Sigma_{m}(X, Y)$ holds. The convergence of $\widehat{\text{Rcov}}_{\widehat{m}_N}(X, Y)$, $\widehat{\mathcal{R}}_{\widehat{m}_N}(X, Y)$, and $\widehat{\text{Rcorr}}_{\widehat{m}_N}(X, Y)$ follows from Theorem \ref{main theorem: strong consistency for Riemannian covariance} and Corollary \ref{strong consistency for correlations}.
\end{proof}

As a variation of the midpoint approach for the case of different Fréchet means, we could choose to compute the Riemannian covariance and correlation at a weighted average of the two means. Namely, rather than considering the midpoint $\gamma(1/2)$ between $\mu$ and $\nu$, we could consider $\gamma\left(\frac{w_1}{w_1+w_2}\right)$. This can be useful for example when the weights $w_1$ and $w_2$ are determined by the Fréchet total variances of $X$ and $Y$, $\mathcal{F}_X(\mu)$ and $\mathcal{F}_Y(\nu)$. Under such a choice, a  result similar to Proposition \ref{Rcorr at midpoint of Frechet sample menas convergence} clearly holds.

\section{Empirical demonstrations}
In this section, we present some simulation studies and real-world data examples to illustrate the effectiveness of the Riemannian correlation as a measure of stochastic dependence.

In the simulations, we describe examples on two manifolds, the first being the unit 2-sphere, $\mathbb{S}^2$, and the other being $SO(3)$, the special orthogonal group of $3 \times 3$ matrices.
The aim of the simulation studies is to explore the finite sample behavior of the sample Riemannian correlation when the strength of the stochastic dependence changes, and in the case where the two samples are independent by construction.
In the first scenario, we generate pairs of random variables with inherent strong dependence and then introduce random perturbations, with increasing perturbation weakening the dependence between the two samples. 
In the second case, we construct a scenario where the two samples are fully independent.
Across these simulated examples, we compare the behaviour of the sample Riemannian correlation (Rcorr) with distance correlation in metric spaces (dcorr, \cite{distance-covariance-in-metric-spaces-Lyons}).
After the simulations, we proceed with a real-world example, where we apply Rcorr to vectorcardiogram datasets.

\subsection{Simulation study for $\mathbb{S}^2$-data}
\subsubsection{Generative Model}

Our manifold of interest here is $\mathbb{S}^2$ with the round metric.
We start by generating random dataset on $\mathbb{S}^2$ using the von Mises distribution, also known as von Mises-Fisher distribution.
An $\mathbb{S}^2$-valued random variable $X$ is to have von Mises-Fisher distribution with concentration $\mu\in\mathbb{S}^2$ and dispersion parameter $\kappa>0$, VMF($\mu,\kappa$), if its density is given by
\begin{equation*}
f(x)=\frac{\kappa}{4\pi \sinh\kappa}\exp\left(\kappa\mu^T x\right),
\end{equation*}
for all $x\in\mathbb{S}^2$.
We generate an initial dataset with a given mean and concentration. 
Then, we will construct a dependent dataset using a geometric transformations that include some noise.

Let us fix a choice of parameters $\mu_0\in\mathbb{S}^2$ and $\kappa_0>0$.
Using the sampling technique as given in \cite{jakob2012numerically}, we generate the initial sample $X=\{X_1,\ldots,X_N\}\sim\text{VMF}(\mu_0,\kappa_0)$.
In order the guarantee the convergence of Fréchet sample mean to a unique point, we chose the concentration parameter $\kappa_0$ such that the sample is contained within a convex ball centred at $\mu_0$.
To generate the second sample, whose dependence on the initial sample is controlled, we first choose a rotation  $R\in SO(3)$.
For computational convenience, we represent rotations by quaternions. A rotation in three-dimensional space is completely determined by the axis of rotation and the angle of rotation about that axis. Specifically, if $R$ is a rotation with unit axis of rotation $A=(\alpha_1,\alpha_2,\alpha_3)$ and $\theta$ is the angle of rotation, then $R$ can be expressed by the quaternion $q=a+bi+cj+dk$ as
\begin{equation*}
R = \begin{pmatrix}
a^2 + b^2 - c^2 - d^2 & 2(bc + ad) & 2(bd - ac) \\
2(bc - ad) & a^2 + c^2 - b^2 - d^2 & 2(cd + ab) \\
2(bd + ac) & 2(cd - ab) & a^2 + d^2 - b^2 - c^2
\end{pmatrix},
\end{equation*}
where $a=\cos(\theta/2)$, $b=\alpha_1\sin(\theta/2)$, $c=\alpha_2\sin(\theta/2)$, and $d=\alpha_3\sin(\theta/2)$. The theory behind rotations and quaternions can be found in \cite{hanson2006visualizing}.

After the applying the rotation $R$ to $X$, we obtain a sample which is deterministically dependent from the original sample. We then add a random perturbation to each data point in the following way.
For $i=1,\ldots,N$, let 
\begin{equation*}
b_{i}\sim N_3(0,\varepsilon^2I_3),
\end{equation*}
where $\varepsilon>0$ is the desired level of noise. The second sample $Y=\{Y_1,\ldots,Y_N\}$ is generated as
\begin{equation*}
Y_{i}=\frac{RX_i+b_i}{\norm{RX_i+b_i}}\in\mathbb{S}^2,
\end{equation*}
for $i=1,\ldots,N$. See Figure \ref{fig:dataset_on_unit_sphere_and_its_perturbation_version} for an example. Clearly, the two datasets are inherently dependent by construction, and this dependence weakens as the noise level in the perturbation increases.

Because of the spherical symmetry of the von Mises-Fisher distribution, we can control Rcorr by varying $\theta$.
For instance, with zero noise, we expect Rcorr to be 1 if $\theta = 0$, and we expect Rcorr to be $-1$ if $\theta = \pi$.
Additionally, in order for the two means to be within a convex set, as in the hypothesis of Proposition \ref{Rcorr at midpoint of Frechet sample menas convergence}, we chose the rotation $R$ so that $\{\mu, R\mu\}$ are not antipodal points.

We consider four different scenarios, which differ in the two samples having the same or different mean and in the type of dependence (or lack thereof) between them.
Based on Proposition \ref{Rcorr at Frechet sample mean convergence}, in the case of the same Fréchet mean, we compute Rcorr at the Fréchet sample mean. On the other hand, if the samples have different Fréchet means, we compute Rcorr at the midpoint between the Fréchet sample mean, as in Proposition \ref{Rcorr at midpoint of Frechet sample menas convergence}.
\begin{figure}[h]
    \centering
    \includegraphics[width=0.9\textwidth]{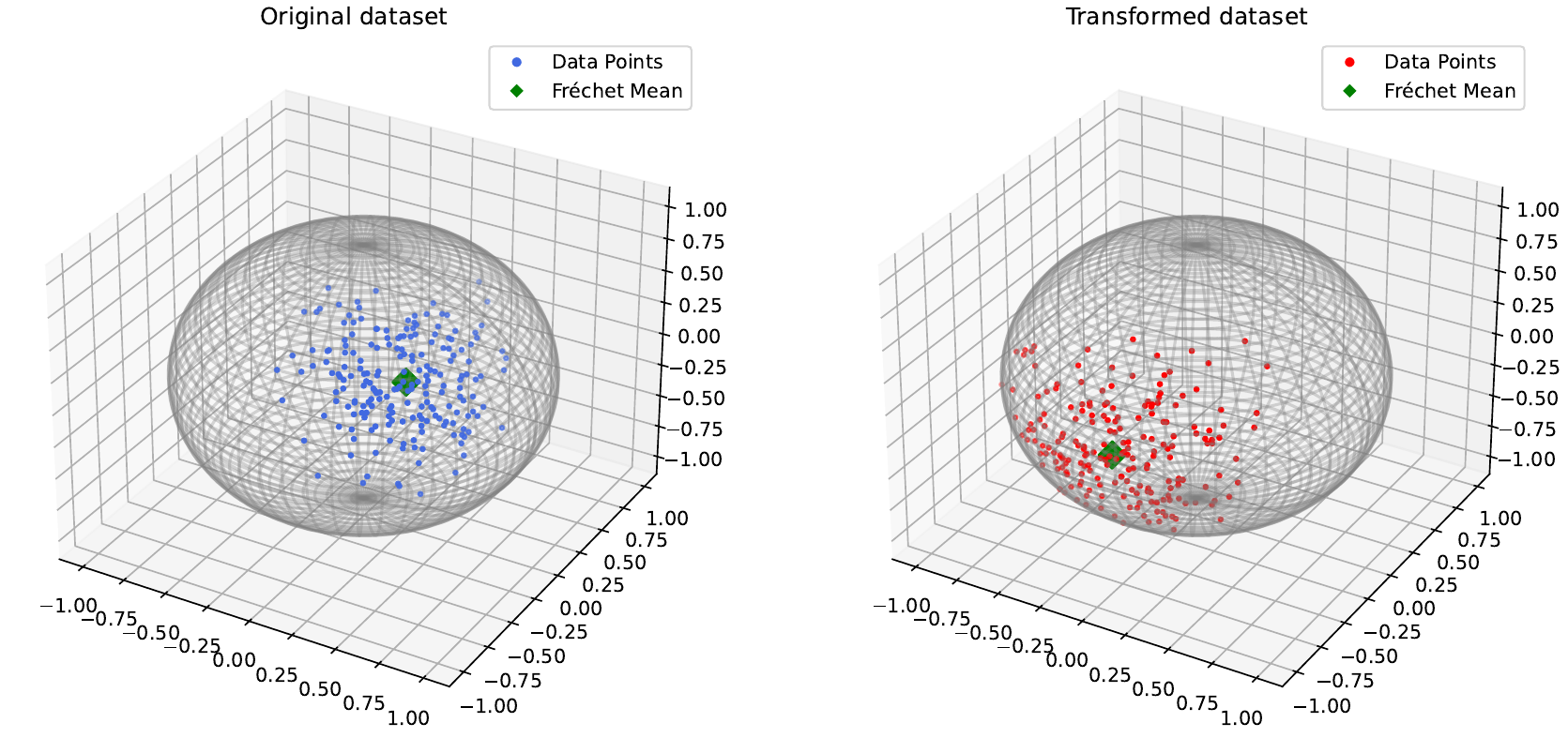}
    \caption{Two dependent datasets on $\mathbb{S}^2$, with the right being a rotated and perturbed version of the left.}
    \label{fig:dataset_on_unit_sphere_and_its_perturbation_version}
\end{figure}
\FloatBarrier

\subsubsection{Simulation results}
Figure \ref{fig:Rcorr_for_dependent_dataset_with_zero_rotation_on_unit_sphere} shows the estimated values of dcorr and Rcorr at various levels of noise, $\varepsilon$, when the initial parameters are chosen so that both datasets have the same Fréchet mean $(R=I_3)$. 
The plot shows that for low randomness both estimates correctly have similarly high value $(\approx 1)$. 
However, as the randomness grows, the sample Rcorr declines faster than the sample dcorr.
This captures how the dependence becomes less evident as the noise increases and the sample Rcorr gets closer to zero for higher level of noise.

Figure \ref{fig:Rcorr_for_dependent_dataset_with_nonzero_rotation_on_unit_sphere} 
we have the same scenario but with the different Fréchet mean, i.e. $R\neq I_3$. 
The plot indicates similar conclusions as in the case of the same Fréchet mean, showing that the proposed approach of using a midpoint is correctly capturing the dependence.

In Figure \ref{fig:negative_rcorr_on_unit_sphere} we consider a third scenario where the two samples have different Fréchet means and $\theta=\pi$, i.e., the two samples are now negatively correlated in the sense of Rcorr, as discussed in Section \ref{Riemannian correlation section}.
This is indeed captured by the sample Rcorr, which has negative values.
On the other hand, the sample dcorr is still positive, since it is a non-negative measure and it cannot distinguish between the two scenarios.

Finally, in Figure \ref{fig:Rcorr_for_independent_datasets_on_unit_sphere}, we have the comparison of these empirical measures for independent samples. Clearly, the sample Rcorr is capturing the independence better than the empirical dcorr. This is not totally surprising, since dcorr considers a broader set of potential dependencies, making it more sensitive to noise.

These simulations demonstrate that the proposed Riemannian correlation is indeed a good measure of dependence for sample on the unit sphere, with some advantages with respect to the distance correlation.

\begin{figure}[H]
    \centering
    \begin{subfigure}[t]{0.45\textwidth}
        \centering
        \includegraphics[width=\textwidth]{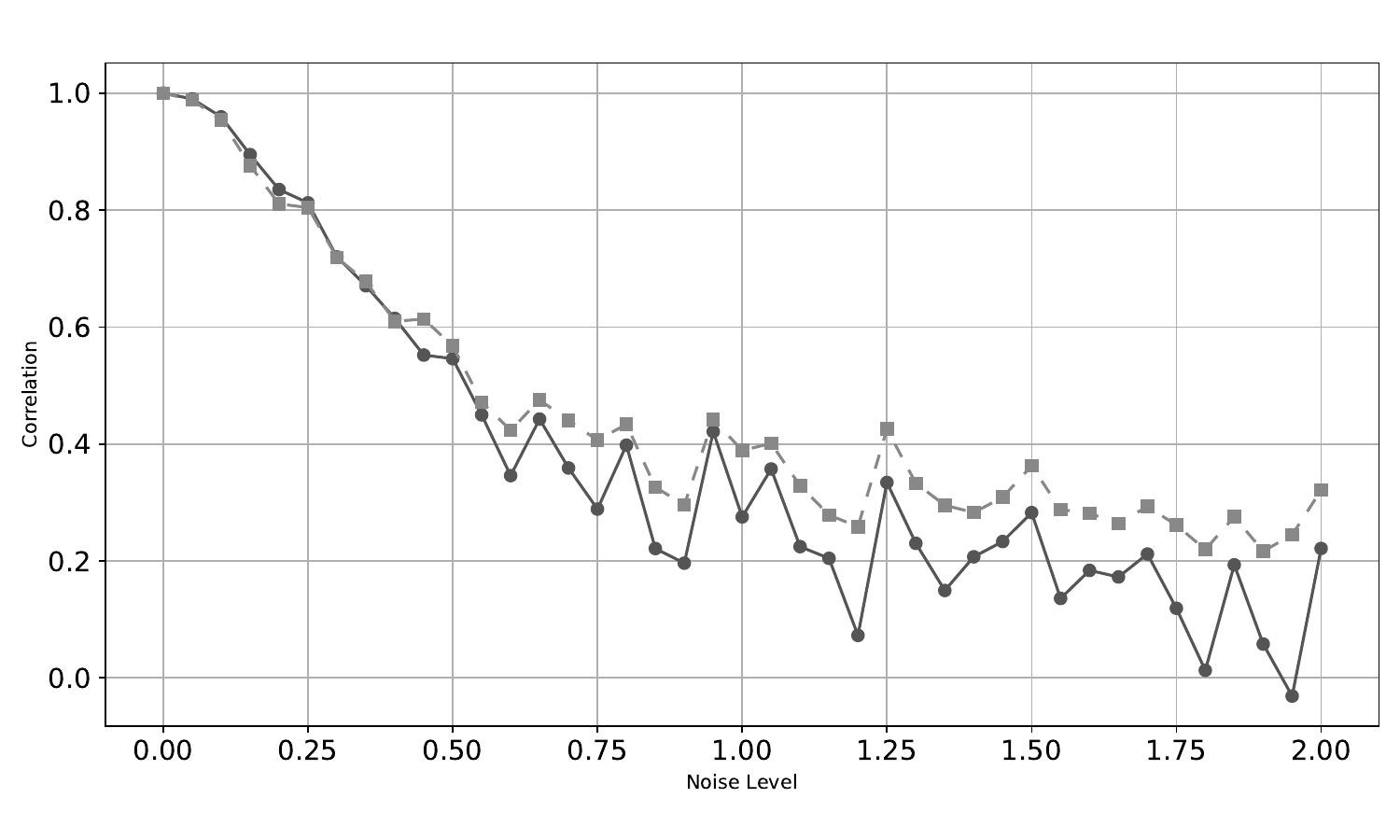}
        \caption{Comparison of Rcorr and dcorr against noise level for datasets on the unit sphere $\mathbb{S}^2$ with the same Fréchet mean. The parameters for the generative process are sample size $=100$, $\mu=(0,0,1)$, and $\kappa=9$.}
        \label{fig:Rcorr_for_dependent_dataset_with_zero_rotation_on_unit_sphere}
    \end{subfigure}
    \hfill
    \begin{subfigure}[t]{0.45\textwidth}
        \centering
        \includegraphics[width=\textwidth]{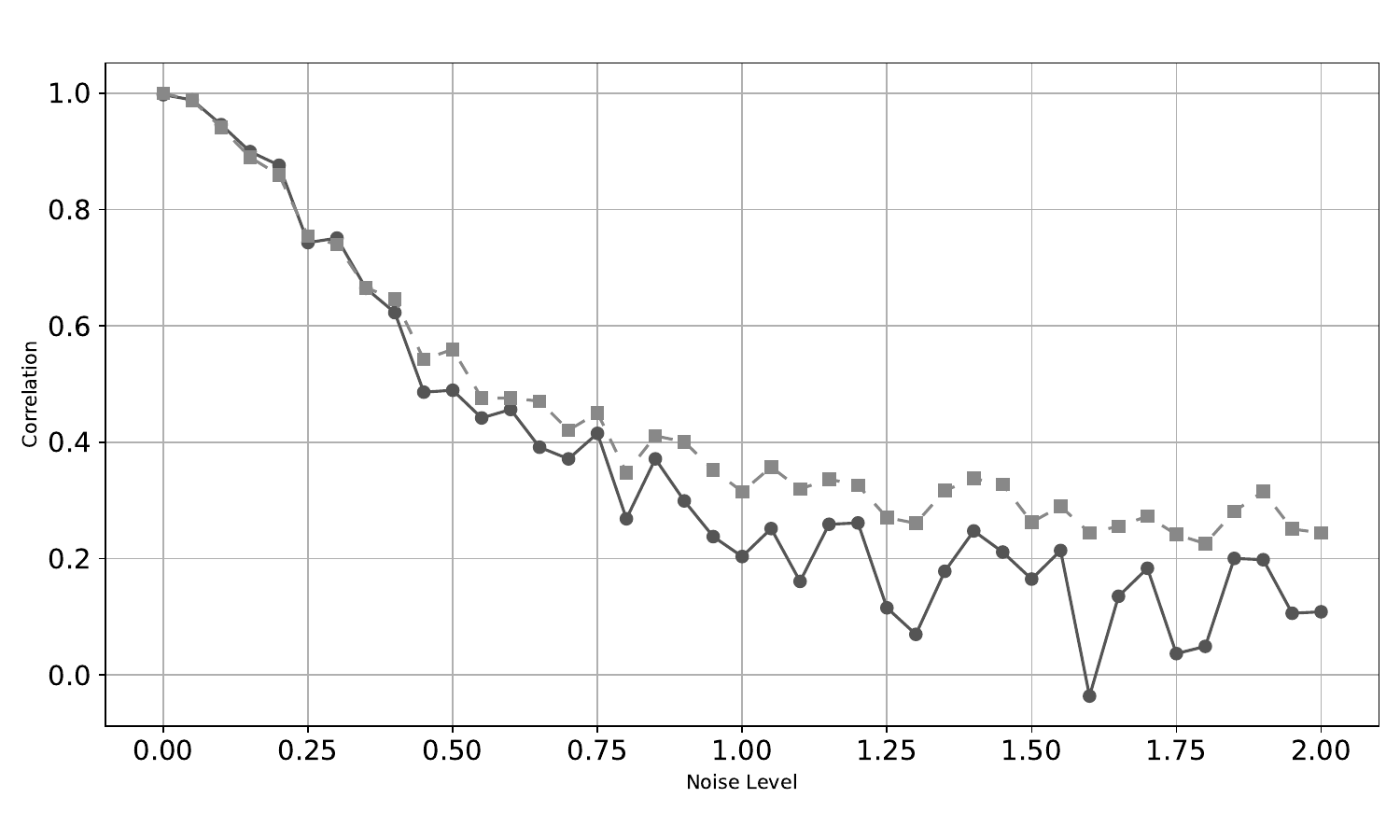}
        \caption{Comparison of Rcorr and dcorr against noise level for datasets on the unit sphere $\mathbb{S}^2$ with different Fréchet means. The parameters for the generative process are sample size $=100$, $\mu=(0,0,1)$, $\kappa=9$, axis of rotation $=(0,1,0)$, and angle of rotation $=\pi/6$.}
        \label{fig:Rcorr_for_dependent_dataset_with_nonzero_rotation_on_unit_sphere}
    \end{subfigure}
    \vskip\baselineskip
    \begin{subfigure}[t]{0.45\textwidth}
        \centering
        \includegraphics[width=\textwidth]{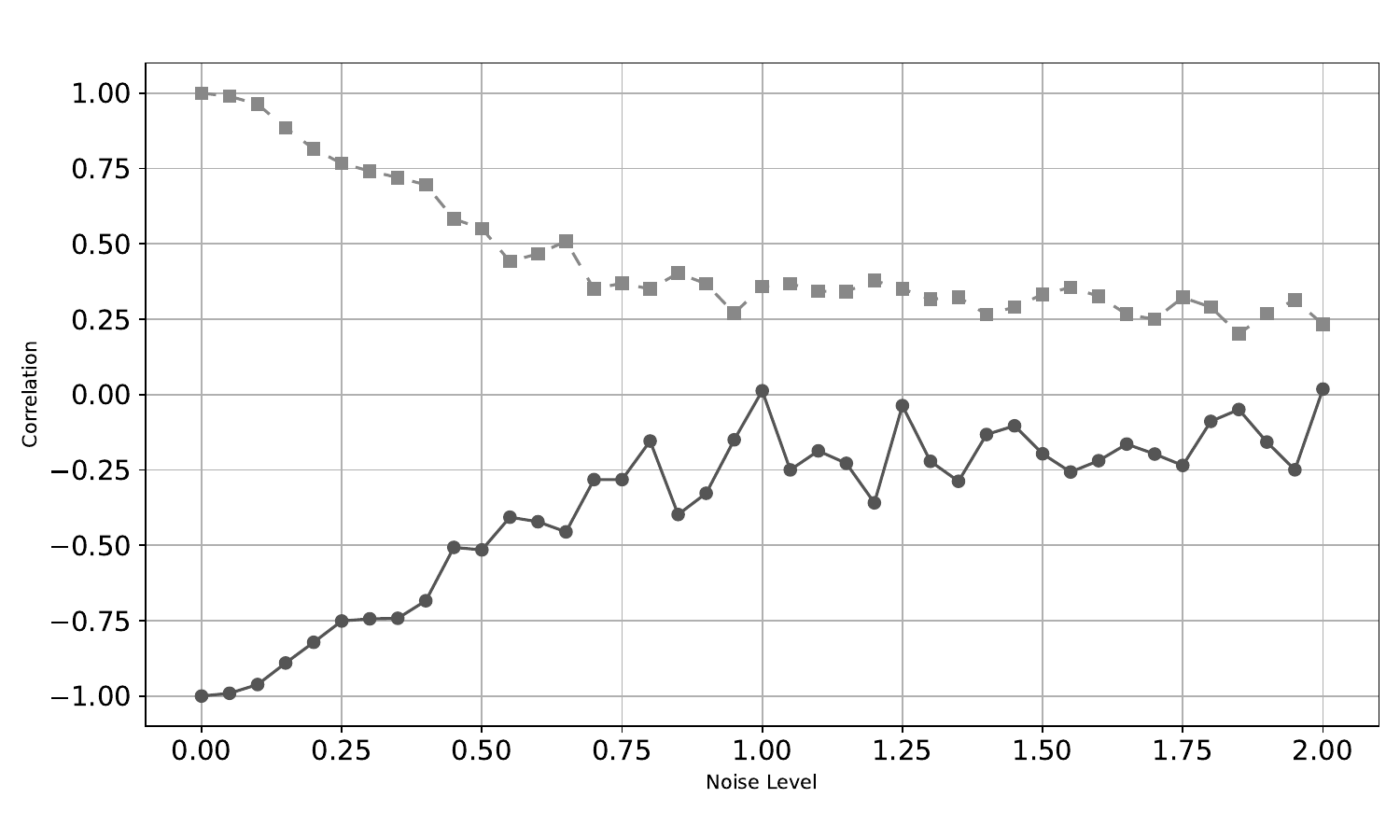}
        \caption{Comparison of Rcorr and dcorr against noise level for datasets on the unit sphere $\mathbb{S}^2$ with different Fréchet means. The parameters for the generative process are sample size $=100$, $\mu=(0,0,1)$, $\kappa=9$, axis of rotation $=(1,1,1)$, and angle of rotation $=\pi$.}
        \label{fig:negative_rcorr_on_unit_sphere}
    \end{subfigure}
    \hfill
    \begin{subfigure}[t]{0.45\textwidth}
        \centering
        \includegraphics[width=\textwidth]{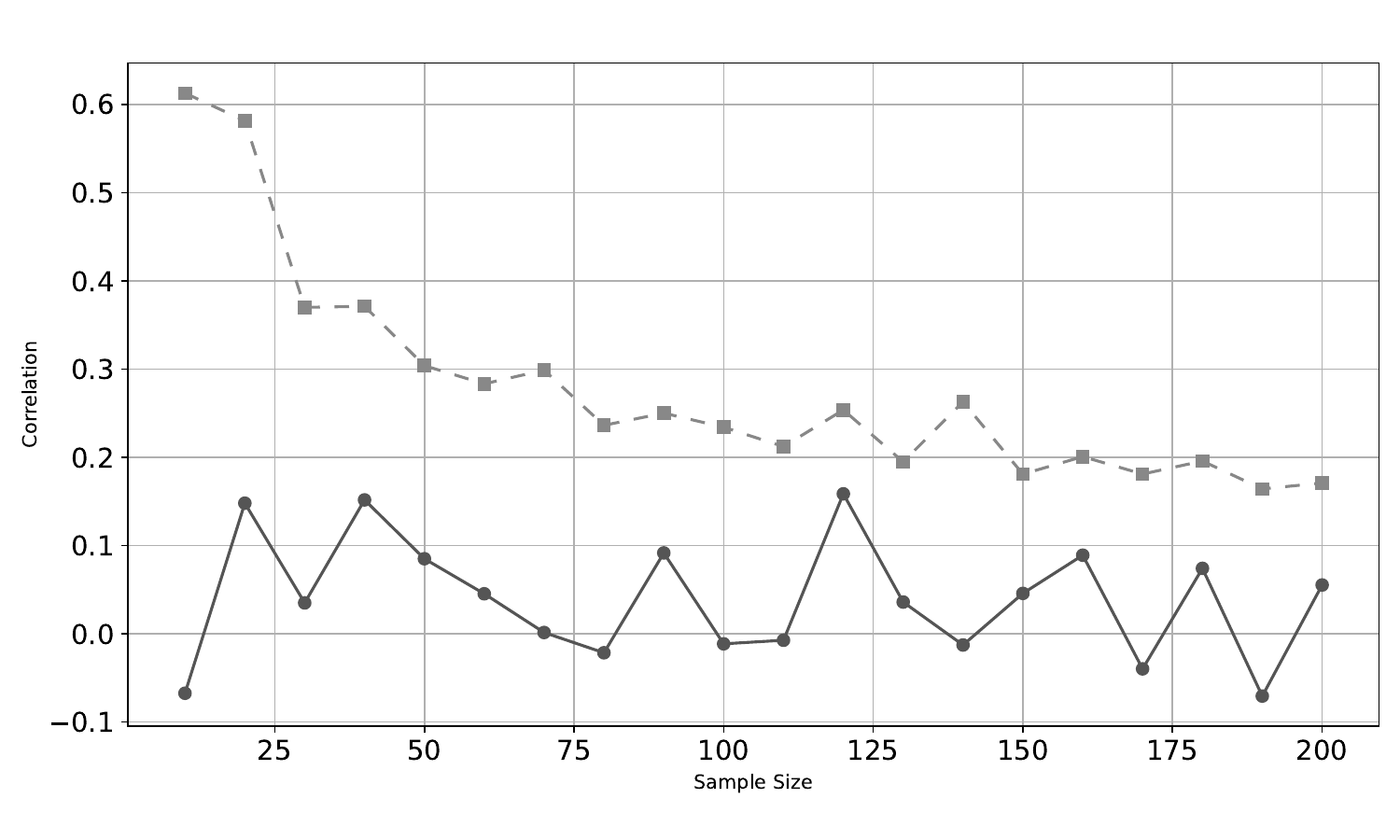}
        \caption{Comparison of Rcorr and dcorr for independent datasets on the unit sphere $\mathbb{S}^2$. The parameters for the generative process are ${\mu_1=(0,0,1)}, \mu_2=(0,1,1), \kappa_1=4,$ and $\kappa_2=5$.}
        \label{fig:Rcorr_for_independent_datasets_on_unit_sphere}
    \end{subfigure}
    \caption{Comparison of Rcorr (solid line with circular makers) and dcorr (dashed line with square markers) for datasets in $\mathbb{S}^2$ under different scenarios.}
    \label{fig:sphere_comparison_all}
\end{figure}
\subsection{Simulation study for $SO(3)$-data}
\subsubsection{Generative Model}
In this subsection, we consider the Lie group $SO$(3) as a Riemannian manifold  with the bi-invariant metric induced by the inner product $\inprod{X}{Y}_{I_3}=\frac{1}{2}tr(XY^T)$ on the tangent space at $I_3$, i.e., its Lie algebra $\mathfrak{so}(3)$.
Using the isomorphism
\begin{align}
\arrowmap{\Phi}{\mathbb{R}^3}{\mathfrak{so}(3)}{
\begin{pmatrix}
x \\
y \\
z
\end{pmatrix}
}{\begin{pmatrix}
0 & x & y \\
-x & 0 & z \\
-y & -z & 0
\end{pmatrix}},
\end{align}
we identify $\mathfrak{so}(3)$ with $\mathbb{R}^3$.
The norms of $A=(x,y,z)^T\in\mathbb{R}^3$ and $\Phi(A)\in\mathfrak{so}(3)$ are related by
\begin{equation}
\norm{A}=\frac{1}{\sqrt{2}}\norm{\Phi(A)}_{F},
\end{equation}
where $\norm{\cdot}$ is the Euclidean norm on $\mathbb{R}^3$ and $\norm{\cdot}_F$ is the Frobenius norm on $\mathfrak{so}(3)$.

As in the simulation for $\mathbb{S}^2$, we generate two samples which we have control of the their dependence. The generative process for the initial dataset $X = \{X_1, \ldots, X_N\}$ in $\mathfrak{so}(3)$ is as follows.
We generate a random set $A=\{A_1, \ldots, A_N\}$ in $\mathbb{R}^3$ such that such that $A_i\sim N_{3}(0,I_3)$ for each $i=1,\ldots,N$.
Using the isomorphism $\Phi$, we map $A$ to $\Phi(A)\subset\mathfrak{so}(3)$.
To ensure the convergence to a unique Fréchet sample mean, we impose a length bound on $\Phi(A_i)$ for each $i$. 
Since the convexity radius of $SO(3)$ is $\pi/2$, to ensure a unique sample mean, it suffices to choose a bound $\alpha$ such that $0<\alpha<\pi/2$.
If $\norm{\Phi(A_i)}_{F}>\alpha$, we rescale $\Phi(A_i)$ by $\frac{\alpha}{\norm{\Phi(A_i)}_{F}}$. 
Then, using the exponential map at the identity, we define our initial set as
\begin{equation*}
X_{i}=\exp(\Phi(A_i)),
\end{equation*}
for $i=1,\ldots,N$, where $\exp(\cdot)$ is the matrix exponential map.
Since the Riemannian metric on $SO(3)$ is bi-invariant, the matrix exponential map is precisely the Riemannian exponential map on the tangent space at the identity matrix for $SO(3)$. 
Note that by construction, the Fréchet sample mean of $X_1, \ldots, X_N$ converges to $I_{3}$ as $N\to\infty$, which is the true Frechet mean of $X$ as $SO(3)$-valued random variable.

To generate a sample $Y = \{Y_1, \ldots, Y_N\}$ that depends on the initial sample $X$, we proceed as follows.
We fix an angle $\theta$ and,
for each $A_i$, we construct a rotation $R_i$ that rotates $A_i$ about an axis orthogonal to $A_i$.
Specifically, if $A_i = (a_{1i}, a_{2i}, a_{3i})$ and $a_{3i} \neq 0$, $R_i$ rotates $A_i$ around the axis $\ell_i = (-a_{2i}, a_{1i}, 0)$ by the angle $\theta$.
If $a_{3i} = 0$, $R_i$ rotates $A_i$ around $(0, 0, 1)$ by $\theta$.
Next, we introduce Gaussian noise to the rotated sample
\begin{equation*}
A'_i = R_iA_i + W_i,
\end{equation*}
where $W_i \sim N_3(0,\varepsilon I_{3})$, for some specified level of randomness $\varepsilon$.

By varying the angle $\theta$, we control the value of Rcorr.
If $\theta = 0$, we expect the Rcorr between the datasets $X$ and $Y$ to be 1, and $-1$ if $\theta = \pi$.
To control the Fréchet mean of the sample $Y$, we select a matrix $B$ in $SO(3)$ such that $d(B, I_3) < \pi/2$, ensuring that the condition in Proposition \ref{Rcorr at midpoint of Frechet sample menas convergence} is met, which requires the Fréchet means to lie within a convex set.

Using \( \Phi \) and the exponential map at the identity, we declare the dependent sample $Y$ to be:
\begin{equation*}
Y_i = B\exp(\Phi(A'_i)), 
\end{equation*}
for $i = 1, \ldots, N$. Because left multiplication is an isometry, the Fréchet sample mean of $Y_1, \ldots, Y_N$ converges to $B$ as $N\to\infty$.
Similar to the simulation for $\mathbb{S}^2$-data, the scheme comprises four different scenarios, which vary based on the type of dependence and whether the Fréchet means are common or distinct.
The sample Riemannian correlation is evaluated according to Propositions \ref{Rcorr at Frechet sample mean convergence} and \ref{Rcorr at midpoint of Frechet sample menas convergence} for the cases of common means and distinct means, respectively.

\subsubsection{Simulation results}

Figure \ref{fig:Rcorr_vs_dcorr_samemean_zero_rotation} shows the two measures of correlation for dependent datasets with the same Fréchet mean, $B=I_3$ and varying noise level.
For small noise levels, the two measures equally capture the stochastic dependence. However, as the noise increases, the sample Rcorr declines faster than the empirical dcorr. Indeed, the sample Rcorr gets close to zero for higher noise levels, while the empirical dcorr still shows a positive value.

On the other hand, Figure \ref{fig:Rcorr_vs_dcorr_differentmeans_rotpi6} compares the correlation measures for two dependent datasets with different Fréchet means, $B \neq I_3$.
Similar to the previous case, the rapid decline of Rcorr is evident compared to dcorr.
However, in contrast to what we observed for the case of $\mathbb{S}^2$ previously, at zero noise, there is a gap between the empirical dcorr and the sample Rcorr, i.e. the sample Rcorr in not able to fully capture the strength of the dependence.

In Figure \ref{fig:Rcorr_vs_dcorr_samemean_rotpi_negative_Rcorr}, the parameters are chosen so that the two datasets are oppositely correlated at their common Fréchet mean.
The sample Rcorr again correctly captures the direction of this correlation, whereas dcorr captures the association in absolute value.

Finally, Figure \ref{fig:Rcorr_for_independent_datasets_in_SO(3)} shows the correlation measures for two independent datasets. As in the case of $\mathbb{S}^2$, the sample Rcorr captures independence more effectively than empirical dcorr for independent datasets.
The simulations for $SO(3)$ confirm the effectiveness of Rcorr in capturing both the direction and strength of dependence between $SO(3)$ datasets.

\begin{figure}[H]
    \centering
    \begin{subfigure}[t]{0.45\textwidth}
        \centering
        \includegraphics[width=\textwidth]{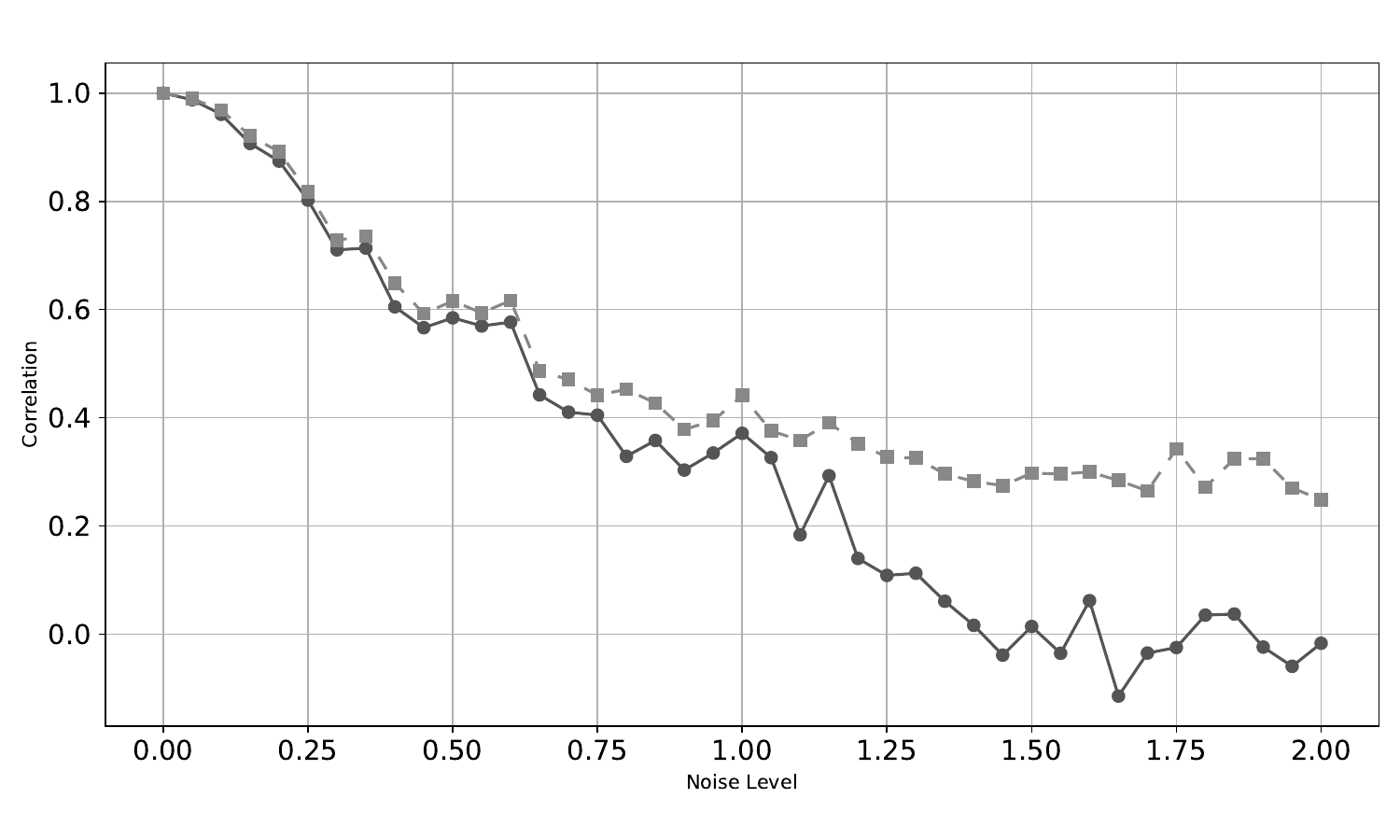}
        \caption{Comparison of Rcorr and dcorr against noise level for datasets in $SO(3)$ with the same Fréchet means. The parameters for the generative process are $\alpha=0.6$, sample size $=100$, and no rotation.}
        \label{fig:Rcorr_vs_dcorr_samemean_zero_rotation}
    \end{subfigure}
    \hfill
    \begin{subfigure}[t]{0.45\textwidth}
        \centering
        \includegraphics[width=\textwidth]{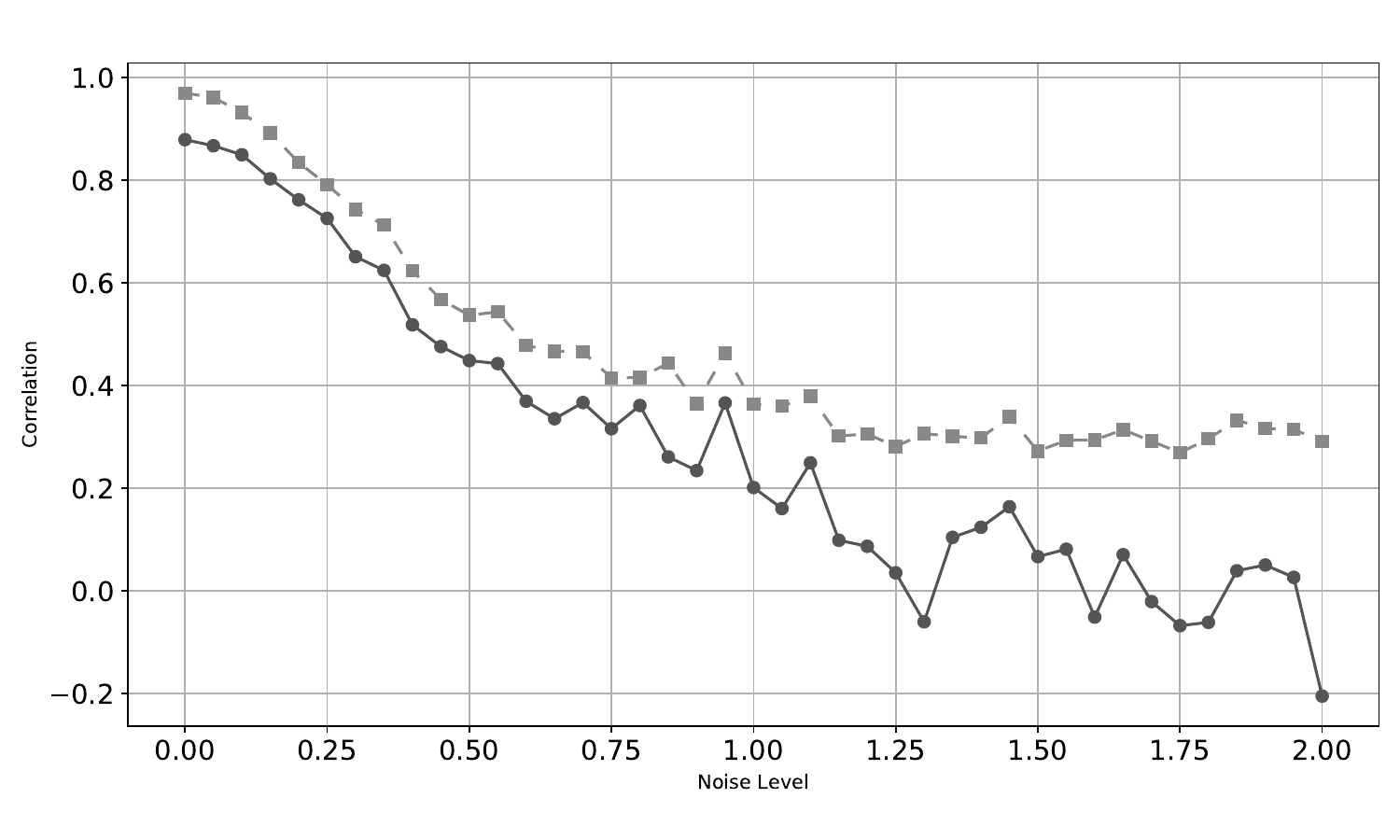}
        \caption{Comparison of Rcorr and dcorr against noise level for datasets in $SO(3)$ with different Fréchet means. The parameters for the generative process are $\alpha=0.6$, sample size $=100$, $B=\exp\begin{pmatrix}
0 & 1 & 0 \\
-1 & 0 & 0 \\
0 & 0 & 0
\end{pmatrix}$, and angle of rotation $=\pi/6$.}
        \label{fig:Rcorr_vs_dcorr_differentmeans_rotpi6}
    \end{subfigure}
    \vskip\baselineskip
    \begin{subfigure}[t]{0.45\textwidth}
        \centering
        \includegraphics[width=\textwidth]{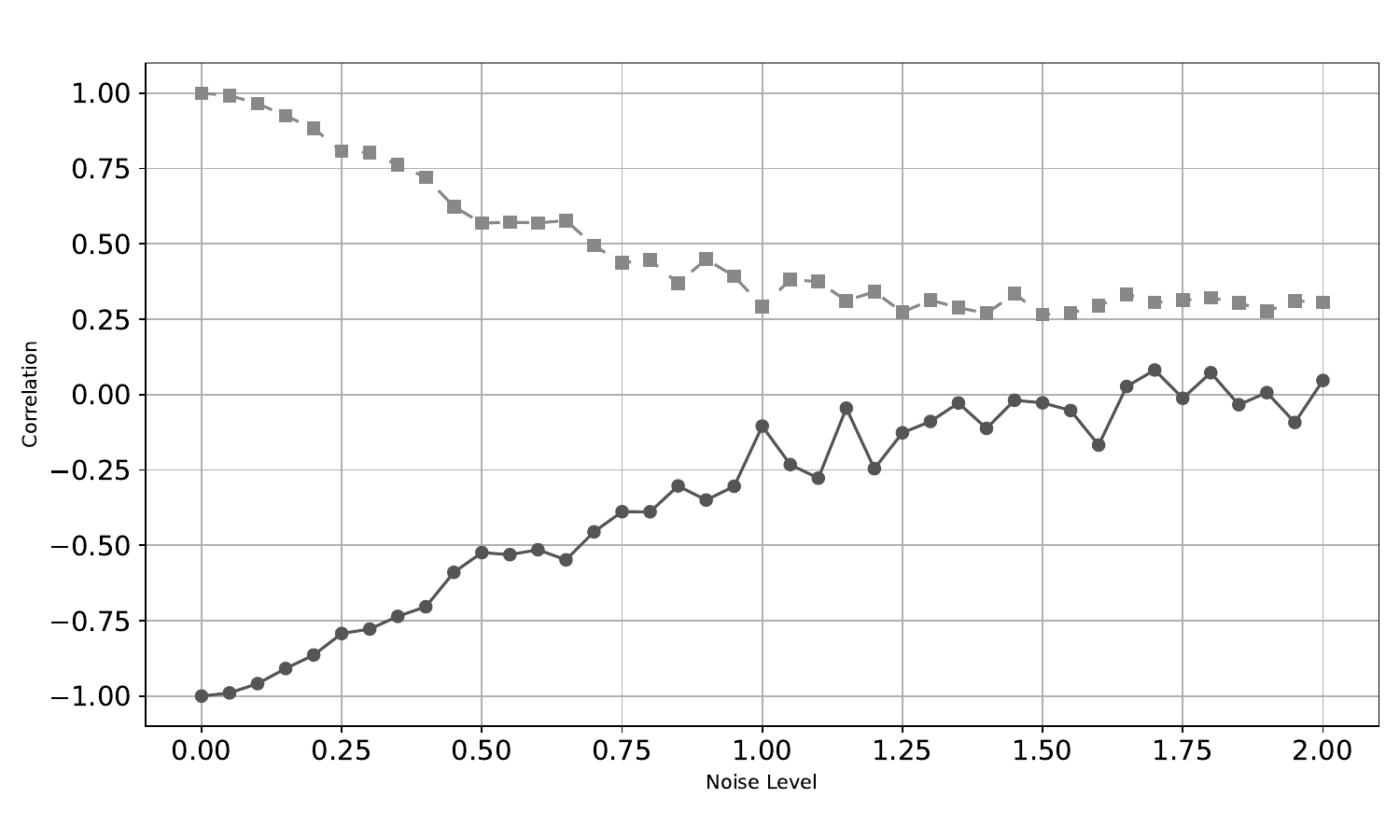}
        \caption{Comparison of Rcorr and dcorr against noise level for datasets in $SO(3)$ with the same Fréchet means. The parameters for the generative process are $\alpha=0.6$, sample size $=100$, axis of rotation $=(1,1,0)$, and angle of rotation $=\pi$.}
        \label{fig:Rcorr_vs_dcorr_samemean_rotpi_negative_Rcorr}
    \end{subfigure}
    \hfill
    \begin{subfigure}[t]{0.45\textwidth}
        \centering
        \includegraphics[width=\textwidth]{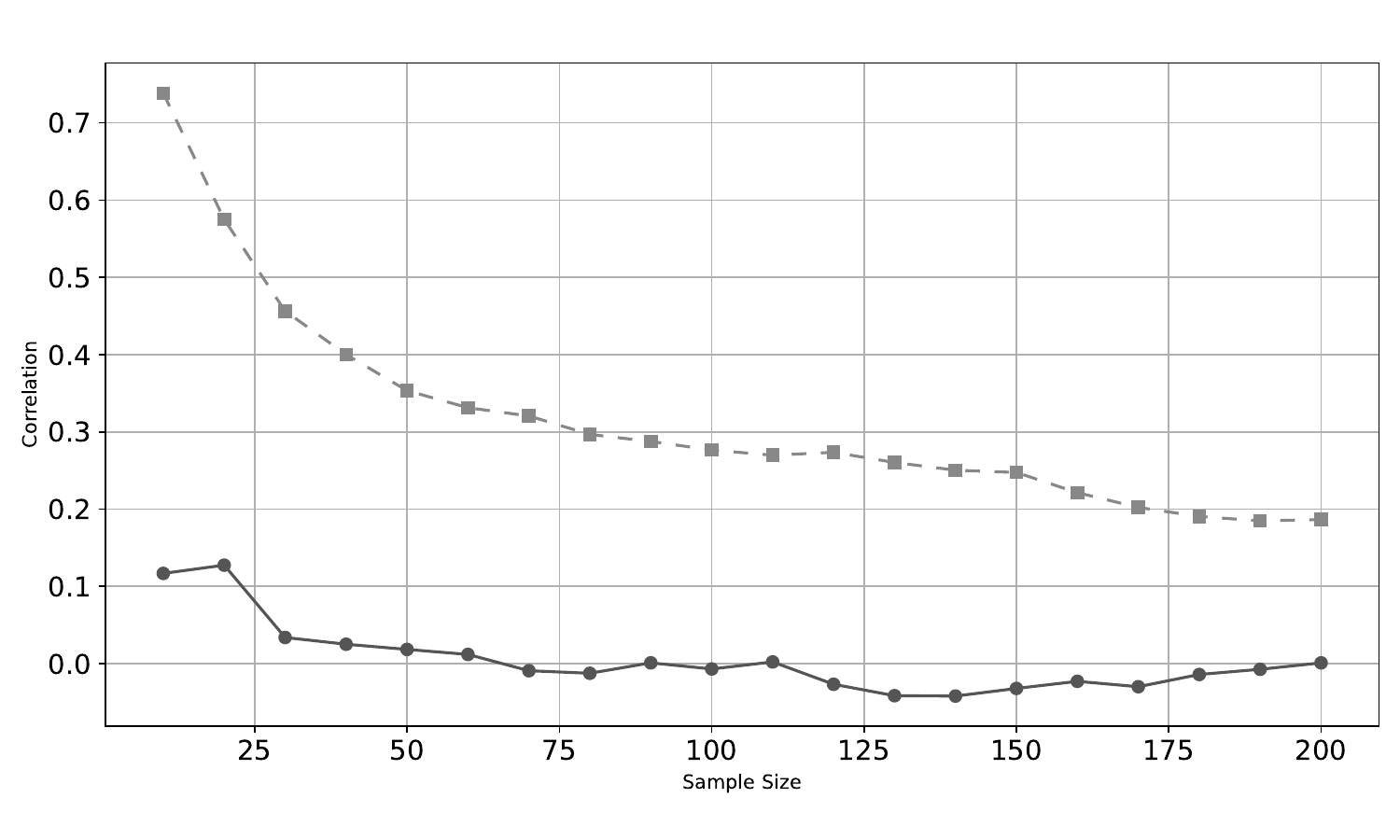}
        \caption{Comparison of Rcorr and dcorr for independent datasets in $SO(3)$ with the same Fréchet means and increasing sample sizes. For both datasets $\alpha=0.6$.}
        \label{fig:Rcorr_for_independent_datasets_in_SO(3)}
    \end{subfigure}
    \caption{Comparison of Rcorr (solid line with circular makers) and dcorr (dashed line with square markers) for datasets in $SO(3)$ under different scenarios.}
    \label{fig:SO(3)_comparison_all}
\end{figure}

\subsection{Application to vectorcardiogram data}
We now illustrate the behavior of the sample Riemannian correlation with an application to data in cardiology.
The vectorcardiogram (VCG) is a technique used in cardiology to measure the electrical activity of the heart. Unlike the electrocardiogram (ECG), the VCG provides data in the form of vectors in $\mathbb{R}^3$, offering a three-dimensional representation of the heart's electrical impulses. This method offers several significant advantages over the ECG, making it a valuable tool in clinical practice \cite{riera2007significance}.

In particular, the VCG measures the net electromotive force generated during the depolarization process of the ventricles. At the end of each depolarization cycle, VCG recordings represent an oriented loop in $\mathbb{R}^3$ known as the QRS loop. 
The points on the QRS loop correspond to vectors representing the resultant vector of the heart's electrical activity at each instant during the cardiac cycle.
Of special interest for clinicians is the direction of the vector with the largest magnitude, in the Euclidean norm.
From a mathematical perspective, these directional data can be considered spherical data, i.e., points on the unit sphere, see \cite{downs2003spherical} and \cite{riera2007significance}.
Two commonly used systems for VCG measurement are the Frank system and the McFee-Parungao system, which we refer to as the F-system and MP-system, respectively.
Because each system employs a different approach to measuring the QRS loop, the resulting data differ.
We consider the question of correlation between the results obtained from these two systems.
For more on the designs of these two systems and others, we refer to \cite{malmivuo1995bioelectromagnetism}.

A clinical dataset, consisting of directions of the maximum vectors in QRS loops, was taken for 25 girls using both systems.
The datasets can be found in Table 1 of \cite{downs2003spherical}. Figure \ref{fig:plot_F_system_MP_system} represents these data as points on the sphere. Evidently, the measurements from the two systems have different Fréchet means.

We want to examine the correlation between the measurements from the two systems.
To do that, we compute the midpoint sample Riemannian correlation of the two measurements, alongside the empirical distance correlation.
The results can be found in Table \ref{tab:correlation_measures}.

\begin{table}[htbp]
\centering
\begin{tabularx}{0.6\textwidth}{Xr}
    \toprule
    \textbf{Correlation Measure} & \textbf{Value} \\
    \midrule
    Empirical distance Correlation (dcorr) & 0.77086 \\
    Sample Riemannian Correlation at Midpoint (Rcorr) & 0.76777 \\
    \bottomrule
\end{tabularx}
\caption{Results for the VCG dataset from \cite{downs2003spherical}}
\label{tab:correlation_measures}
\end{table}

The empirical distance correlation suggests a relatively strong dependence/ association between the measurements of the two systems.
The sample Riemannian correlation at the midpoint is in agreement about the strength of the association.
Since the value of the sample Riemannian correlation is very close to one of the empirical distance correlation, we can infer that the association between the F-system and MP-system is explained by Rcorr at the midpoint.
Specifically, the measurements of the two systems tend to move in the same direction along the geodesics originating from the midpoint of their respective Fréchet means.
This gives us insights of the nature of the dependence between the measurements of the two systems.

These observations are consistent with the findings in Example 1 of \cite{downs2003spherical}, where methods for spherical regression data were applied.

\begin{figure}[H]
\centering
\includegraphics[width=0.7\textwidth]{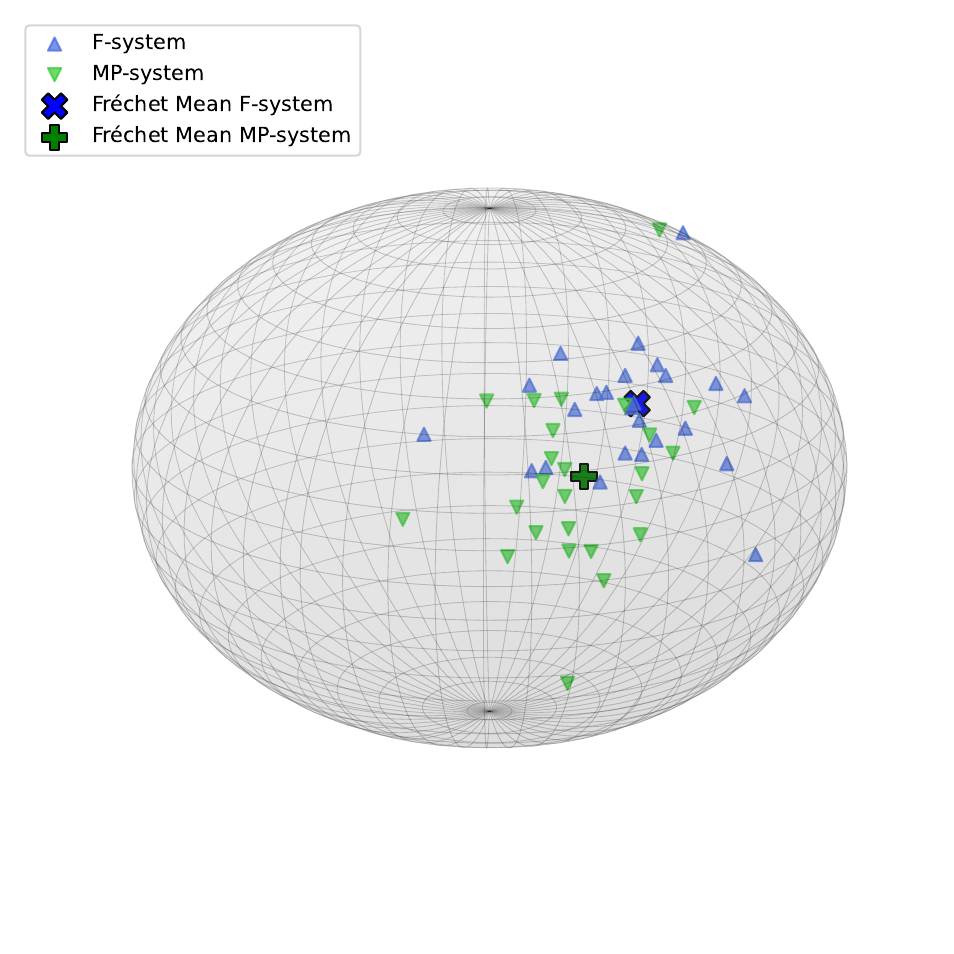}
\caption{Plot of the F-system and the MP-system datasets.}
\label{fig:plot_F_system_MP_system}
\end{figure}

\section{Conclusions and Future directions}
We introduced a novel approach to measure stochastic dependence between manifold-valued random variables, based on local measures of dependence that generalise the concepts of covariance and correlation for Euclidean-valued random variables.
These measures utilise the established framework of Fréchet moments for manifold-valued random variables, providing an intrinsic framework for analysing data in Riemannian manifolds.

We also provided consistent estimators for the Riemannian covariance and the Riemannian correlation and we demonstrated the effectiveness of these measures through simulation studies and real-world datasets. 
The simulation results showed that the proposed estimators capture the dependence (of lack thereof) for manifold-valued data and perform well compared to the existing measure of dependence based on distance correlation.

An interesting open question is how the Riemannian covariance depends on the choice of the reference point $p$. We suggested a practical approach to choose $p$ based on its centrality with respect to the data clouds, so that the resulting tangent space can provide a good approximation for the data. On the other hand, it is possible to imagine a more principled approach for the choice of the reference point, where the Riemannian covariance/correlation is treated as a function of the reference point and the variation is explored, for example along the geodesic connecting the two means, using a similar approach to the one discussed at the end of Section \ref{Different means: Common tangent space section}. 
Alternatively, rather than focusing on the geodesic between the Fréchet means, we could look for maxima and minima of the Riemannian covariance/correlation across the manifold, since the range obtained may provide insights into the dependence between the two variables. This line of research will pose an optimization problem within the framework of Riemannian manifold, but also a question of interpretation on how to compare measures on different tangent spaces. This can also be useful in scenarios where the Fréchet means are not unique, and therefore the reference points we used cannot be uniquely identified.

\section*{Codes and reproducibility}
The codes used for the simulations in the paper, as well as those for processing the vectorcardiogram data, can be found in the GitHub repository: 
\url{https://github.com/MeshalAbuqrais/Rcorr-simulations}
\appendix
\section{Additional proofs}
\label{sec:appendix}
This appendix contains proofs for some of the statements in the article.

\subsection{Proof of Proposition \ref{proposition Rcov(X,X) and Frechet function simple relation}}
\begin{proof}
Because the logarithm is radial isometry, we have
\begin{equation*}
d(X,p)=\abs{\abs{\log_{p} X}}=\sqrt{(\log_{p}X)^T(\log_p X)}.
\end{equation*}
Using this fact, it immediately follows that 
\begin{align*}
tr(\Sigma_{p}(X,X))&=E[(\log_{p}X)^T(\log_p X)]-E\left[\log_{p}X\right]^TE\left[\log_{p}X\right]
\\
&=E[d^{2}(X,p)]-E\left[\log_{p}X\right]^TE\left[\log_{p}X\right]=\mathcal{F}_{2}(p)-\norm{E\left[\log_{p}X\right]}^2.
\end{align*}
If $p=\mu$, then by the characterising property of Fréchet mean, we have $E[\log_{\mu}X]=0$. Thus, Rcov$_{\mu}(X,X)=\mathcal{F}_{2}(\mu)$.
\end{proof}
\subsection{Proof of Propostion \ref{proposition: Rcorr is between -1 and 1}}
\begin{proof}
First note that $\text{Rcorr}_p(X,Y)$ is related to $\text{Rcov}_{p}(X,Y)$ by
\begin{equation}
\text{Rcorr}_{p}(X,Y)=tr(\mathcal{R}_p(X,Y))=\frac{\text{Rcov}_{p}(X,Y)}{\sqrt{tr(\Sigma_{p}(X,X))} \sqrt{tr(\Sigma_{p}(Y,Y))}}.
\end{equation}
and $\text{Rcov}_{p}(X,Y)$ can be written as
\begin{equation}
\text{Rcov}_{p}(X,Y)=E\left[\left(\log_{p}X-E\left[\log_{p}X\right]\right)^T\left(\log_{p}Y-E\left[\log_{p}Y\right]\right)\right].
\end{equation}
We have
\begin{align*}
\abs{\text{Rcov}_{p}(X,Y)}&=\abs{E\left[\left(\log_{p}X-E\left[\log_{p}X\right]\right)^T\left(\log_{p}Y-E\left[\log_{p}Y\right]\right)\right]} \\
&\leq E\left[\abs{\left(\log_{p}X-E\left[\log_{p}X\right]\right)^T\left(\log_{p}Y-E\left[\log_{p}Y\right]\right)}\right].
\end{align*}
By the Cauchy-Schwarz inequality,
\begin{align*}
&E\left[\abs{\left(\log_{p}X-E\left[\log_{p}X\right]\right)^T\left(\log_{p}Y-E\left[\log_{p}Y\right]\right)}\right] \\
&\leq E\left(\norm{\log_{p}X-E\left[\log_{p}X\right]}^2\right)^{1/2} E\left(\norm{\log_{p}Y-E\left[\log_{p}Y\right]}^2\right)^{1/2} \\
&=E\left[tr\left(\left(\log_{p}X-E\log_{p}X\right)^T\left(\log_{p}X-E\log_{p}X\right)\right)\right]^{1/2} \\
&\quad \times E\left[tr\left(\left(\log_{p}Y-E\log_{p}Y\right)^T\left(\log_{p}Y-E\log_{p}Y\right)\right)\right]^{1/2}.
\end{align*}
Therefore, by interchanging the trace and the expectations
\begin{equation}
\abs{Rcov_{p}(X,Y)}^2 \leq tr\left(\Sigma_{p}(X,X)\right) tr\left(\Sigma_{p}(Y,Y)\right)
\end{equation}
which gives $\abs{\text{Rcorr}(X,Y)_p}\leq 1$.
\end{proof}

\subsection{Proof of Lemma \ref{lemma: boundedness of Frechet function}}
\begin{proof}
First we show that $\mathcal{F}_{r,X}$ is continuous. Let $p,q\in M$ and $\varepsilon>0$.
\begin{equation*}
\abs{\mathcal{F}_{r,X}(p)-\mathcal{F}_{r,X}(q)}\leq \int_{M}\abs{d^r{p,x}-d^{r}(q,x)}dQ_{X}(x)\leq \text{Vol}(M)\sup_{x\in M}\abs{d^{r}(p,x)-d^{r}(q,x)}.
\end{equation*}
By continuity of the distance function $d$, it follows that if $\abs{\mathcal{F}_{r,X}(p)-\mathcal{F}_{r,X}(q)}< \varepsilon$, there exists $\delta>0$ such that 
\begin{equation*}
\abs{d^{r}(p,x)-d^{r}(q,x)}<\frac{\varepsilon}{\text{Vol}(M)},
\end{equation*}
whenever $d(p,q)<\delta$. Thus, $\mathcal{F}_{r,X}$ is continuous for all $r>0$.
Since $M$ is a compact metric space, the Fréchet function $\mathcal{F}_{r,X}$ attains both a maximum and a minimum.
\end{proof}
\subsection{Proof of Lemma \ref{lemma on almost sure convergence of log map}}
\begin{proof}
Let $p_n$ be a sequence as assumed in the lemma. Consider the geodesic ball $\mathcal{B}_{p_n}(r_n)$ centred at $p_n$ with radius
\begin{equation*}
r_{n}=\text{inj}(p)-d(p,p_n).
\end{equation*}
Since $p_n\overset{a.s.}{\to} p$,  $d(p,p_n)\to 0$ almost surely. Therefore, there exists $n_0$ such that for all $n>n_0$, $r<r_{n}<\text{inj}(p)$ and, thus,
\begin{equation*}
\log_{p_n}X,
\end{equation*}
is well-defined. By continuity of the logarithm map with respect to the base point we have
\begin{equation*}
\log_{p_n}X\overset{a.s.}{\longrightarrow} \log_{p}X,
\end{equation*}
as $n\to\infty$.
\end{proof}

\bibliographystyle{plainnat}
\bibliography{references}

\begin{thebibliography}{29}
\providecommand{\natexlab}[1]{#1}
\providecommand{\url}[1]{\texttt{#1}}
\expandafter\ifx\csname urlstyle\endcsname\relax
  \providecommand{\doi}[1]{doi: #1}\else
  \providecommand{\doi}{doi: \begingroup \urlstyle{rm}\Url}\fi

\bibitem[Afsari(2011)]{afsari2011riemannian}
Bijan Afsari.
\newblock Riemannian ${L}^{p}$ center of mass: existence, uniqueness, and
  convexity.
\newblock \emph{Proceedings of the American Mathematical Society}, 139\penalty0
  (2):\penalty0 655--673, 2011.

\bibitem[Aitchison(1982)]{aitchison1982statistical}
John Aitchison.
\newblock The statistical analysis of compositional data.
\newblock \emph{Journal of the Royal Statistical Society: Series B
  (Methodological)}, 44\penalty0 (2):\penalty0 139--160, 1982.

\bibitem[Berger(2003)]{berger2003panoramic}
Marcel Berger.
\newblock \emph{A panoramic view of Riemannian geometry}.
\newblock Springer, 2003.

\bibitem[Bharath et~al.(2018)Bharath, Kurtek, Rao, and
  Baladandayuthapani]{bharath2018radiologic}
Karthik Bharath, Sebastian Kurtek, Arvind Rao, and Veerabhadran
  Baladandayuthapani.
\newblock Radiologic image-based statistical shape analysis of brain tumours.
\newblock \emph{Journal of the Royal Statistical Society Series C: Applied
  Statistics}, 67\penalty0 (5):\penalty0 1357--1378, 2018.

\bibitem[Buser and Karcher(1981)]{buser1981karcher-gromov}
P.~Buser and H.~Karcher.
\newblock \emph{Gromov's Almost Flat Manifolds}.
\newblock Asterisque : No. 81, 1981. Soci{\'e}t{\'e} math{\'e}matique de
  France, 1981.
\newblock URL \url{https://books.google.co.uk/books?id=ZhIZAQAAIAAJ}.

\bibitem[Downs(2003)]{downs2003spherical}
TD~Downs.
\newblock Spherical regression.
\newblock \emph{Biometrika}, 90\penalty0 (3):\penalty0 655--668, 2003.

\bibitem[Dryden and Mardia(2016)]{dryden2016statistical}
Ian~L Dryden and Kanti~V Mardia.
\newblock \emph{Statistical shape analysis: with applications in R}.
\newblock John Wiley \& Sons, 2016.

\bibitem[Fletcher and Joshi(2007)]{fletcher2007riemannian}
P~Thomas Fletcher and Sarang Joshi.
\newblock Riemannian geometry for the statistical analysis of diffusion tensor
  data.
\newblock \emph{Signal Processing}, 87\penalty0 (2):\penalty0 250--262, 2007.

\bibitem[Fr{\'e}chet(1948)]{frechet1948elements}
Maurice Fr{\'e}chet.
\newblock Les {\'e}l{\'e}ments al{\'e}atoires de nature quelconque dans un
  espace distanci{\'e}.
\newblock In \emph{Annales de l'institut Henri Poincar{\'e}}, volume~10, pages
  215--310, 1948.

\bibitem[Hanson and Cunningham(2006)]{hanson2006visualizing}
A.J. Hanson and S.~Cunningham.
\newblock \emph{Visualizing Quaternions}.
\newblock The Morgan Kaufmann Series in Interactive 3D Technology. Elsevier
  Science, 2006.
\newblock ISBN 9780080474779.
\newblock URL \url{https://books.google.co.uk/books?id=CoUB09xzme4C}.

\bibitem[Hjorth et~al.(2002)Hjorth, Kokkendorff, and
  Markvorsen]{hjorth2002hyperbolic}
P~Hjorth, S~Kokkendorff, and Steen Markvorsen.
\newblock Hyperbolic spaces are of strictly negative type.
\newblock \emph{Proceedings of the American Mathematical Society}, 130\penalty0
  (1):\penalty0 175--181, 2002.

\bibitem[Jakob(2012)]{jakob2012numerically}
Wenzel Jakob.
\newblock Numerically stable sampling of the von mises-fisher distribution on
  $\mathbb{S}^2$ (and other tricks).
\newblock \emph{Interactive Geometry Lab, ETH Z{\"u}rich, Tech. Rep}, 6, 2012.

\bibitem[Lee(2018)]{Rie-Lee}
John~M Lee.
\newblock \emph{Introduction to Riemannian manifolds}, volume~2.
\newblock Springer, 2018.

\bibitem[Lyons(2013)]{distance-covariance-in-metric-spaces-Lyons}
Russell Lyons.
\newblock {Distance covariance in metric spaces}.
\newblock \emph{The Annals of Probability}, 41\penalty0 (5):\penalty0 3284 --
  3305, 2013.
\newblock \doi{10.1214/12-AOP803}.
\newblock URL \url{https://doi.org/10.1214/12-AOP803}.

\bibitem[Malmivuo and Plonsey(1995)]{malmivuo1995bioelectromagnetism}
Jaakko Malmivuo and Robert Plonsey.
\newblock \emph{Bioelectromagnetism: principles and applications of bioelectric
  and biomagnetic fields}.
\newblock Oxford University Press, USA, 1995.

\bibitem[Mardia and Jupp(2009)]{mardia2009directional}
Kanti~V Mardia and Peter~E Jupp.
\newblock \emph{Directional statistics}.
\newblock John Wiley \& Sons, 2009.

\bibitem[Marron and Alonso(2014)]{marron2014overview}
J~Steve Marron and Andr{\'e}s~M Alonso.
\newblock Overview of object oriented data analysis.
\newblock \emph{Biometrical Journal}, 56\penalty0 (5):\penalty0 732--753, 2014.

\bibitem[Marron and Dryden(2021)]{marron2021object}
James~Stephen Marron and Ian~L Dryden.
\newblock \emph{Object oriented data analysis}.
\newblock Chapman and Hall/CRC, 2021.

\bibitem[Menafoglio et~al.(2021)Menafoglio, Guadagnini, Guadagnini, and
  Secchi]{menafoglio2021object}
Alessandra Menafoglio, Laura Guadagnini, Alberto Guadagnini, and Piercesare
  Secchi.
\newblock Object oriented spatial analysis of natural concentration levels of
  chemical species in regional-scale aquifers.
\newblock \emph{Spatial Statistics}, 43:\penalty0 100494, 2021.

\bibitem[Pan et~al.(2020)Pan, Wang, Zhang, Zhu, and Zhu]{pan2020ball}
Wenliang Pan, Xueqin Wang, Heping Zhang, Hongtu Zhu, and Jin Zhu.
\newblock Ball covariance: A generic measure of dependence in banach space.
\newblock \emph{Journal of the American Statistical Association}, 2020.

\bibitem[Patrangenaru and Ellingson(2016)]{patrangenaru2016nonparametric}
Victor Patrangenaru and Leif Ellingson.
\newblock \emph{Nonparametric statistics on manifolds and their applications to
  object data analysis}.
\newblock CRC Press, Taylor \& Francis Group Boca Raton, 2016.

\bibitem[Pennec(2006)]{pennec2006intrinsic}
Xavier Pennec.
\newblock Intrinsic statistics on riemannian manifolds: Basic tools for
  geometric measurements.
\newblock \emph{Journal of Mathematical Imaging and Vision}, 25:\penalty0
  127--154, 2006.

\bibitem[Pennec et~al.(2019)Pennec, Sommer, and
  Fletcher]{pennec-sommer-fletcher-2019riemannian-medical}
Xavier Pennec, Stefan Sommer, and Tom Fletcher.
\newblock \emph{Riemannian geometric statistics in medical image analysis}.
\newblock Academic Press, 2019.

\bibitem[Riera et~al.(2007)Riera, Uchida, Ferreira~Filho, Meneghini, Ferreira,
  Schapacknik, Dubner, and Moffa]{riera2007significance}
Andr{\'e}s Ricardo~P{\'e}rez Riera, Augusto~H Uchida, Celso Ferreira~Filho,
  Adriano Meneghini, Celso Ferreira, Edgardo Schapacknik, Sergio Dubner, and
  Paulo Moffa.
\newblock Significance of vectorcardiogram in the cardiological diagnosis of
  the 21st century.
\newblock \emph{Clinical cardiology}, 30\penalty0 (7):\penalty0 319, 2007.

\bibitem[Severn et~al.(2022)Severn, Dryden, and Preston]{severn2022manifold}
Katie~E Severn, Ian~L Dryden, and Simon~P Preston.
\newblock Manifold valued data analysis of samples of networks, with
  applications in corpus linguistics.
\newblock \emph{The Annals of Applied Statistics}, 16\penalty0 (1):\penalty0
  368--390, 2022.

\bibitem[Shao et~al.(2022)Shao, Lin, and Yao]{shao2022intrinsic}
Lingxuan Shao, Zhenhua Lin, and Fang Yao.
\newblock Intrinsic riemannian functional data analysis for sparse longitudinal
  observations.
\newblock \emph{The Annals of Statistics}, 50\penalty0 (3):\penalty0
  1696--1721, 2022.

\bibitem[Sz{\'e}kely et~al.(2007)Sz{\'e}kely, Rizzo, and
  Bakirov]{szekely2007measuring}
G{\'a}bor~J. Sz{\'e}kely, Maria~L. Rizzo, and Nail~K. Bakirov.
\newblock {Measuring and testing dependence by correlation of distances}.
\newblock \emph{The Annals of Statistics}, 35\penalty0 (6):\penalty0 2769 --
  2794, 2007.
\newblock \doi{10.1214/009053607000000505}.
\newblock URL \url{https://doi.org/10.1214/009053607000000505}.

\bibitem[Tu(2017)]{tu2017differential}
Loring~W Tu.
\newblock \emph{Differential geometry: connections, curvature, and
  characteristic classes}, volume 275.
\newblock Springer, 2017.

\bibitem[Zhan et~al.(2019)Zhan, Ma, Liu, and Shimizu]{zhan2019circular}
Xiaoping Zhan, Tiefeng Ma, Shuangzhe Liu, and Kunio Shimizu.
\newblock On circular correlation for data on the torus.
\newblock \emph{Statistical papers}, 60:\penalty0 1827--1847, 2019.

\end{thebibliography}

\end{document}